\theoremstyle{plain}
\newtheorem{thm}{Theorem}
\newcommand{\slide}[1]
{\vspace*{22cm}
\begin{rotate}{90}
\begin{minipage}[l]{22cm}
\setlength{\parskip}{.5cm}
#1
\end{minipage}
\end{rotate}
\pagebreak
}
\begin{document}
\title{On Invariants and Forbidden Sets}
\author[Frank J. Palladino]{Frank J. Palladino}
\address{Department of Mathematics, University of Rhode Island,Kingston, RI 02881-0816, USA;}
\email{frank@math.uri.edu}
\date{April 1, 2010}
\subjclass{39A10,39A11}
\keywords{difference equation, invariant, forbidden set, closed form solution}

\begin{abstract}
\noindent 
We introduce six new algebraic invariants for rational difference equations. We use these invariants to perform a reduction of order in each case. This reduction of order allows us to find forbidden sets in each case. These six cases include two linear fractional rational difference equations of order greater than one. 
In all six cases, we give a closed form solution for all initial conditions which are not in the forbidden set. In all six cases, the initial conditions and parameters are assumed to be arbitrary complex numbers.
\end{abstract}
\maketitle

\section{Introduction}
Invariants have been used in several cases to provide insight into the behavior of rational difference equations, see [1]-[4], [6]-[10], [12]-[14], [16]-[22], [24], [25], and [38]-[40]. In this article, we introduce six new algebraic invariants for six second order rational difference equations. Two of the six second order rational difference equations are linear fractional rational difference equations. We use these invariants to perform a reduction of order in each case. This reduction of order allows us to find forbidden sets in each case. In each case, the second order rational difference equation is transformed, via the invariant, to a family of first order linear fractional rational difference equations. 
Since both the forbidden set, and the closed form solution are known for first order linear fractional rational difference equations, we use this information to find the forbidden set and closed form solution for the equations given in Section 4. For the reader's convenience, the forbidden set and closed form solution for first order linear fractional rational difference equations are presented in Section 3. In all six cases, we give a closed form solution for all initial conditions which are not in the forbidden set. In all six cases, the initial conditions and parameters are assumed to be arbitrary complex numbers.
In Section 2, we give some background information on some well known examples of invariants and forbidden sets for rational difference equations. 

\section{Background on invariants and forbidden sets for rational difference equations}

When building an understanding of invariants for rational difference equations it is helpful to use the following equations as prototypes:
\begin{equation}
x_{n+1}=\frac{\alpha + x_n}{x_{n-1}},\quad n=0,1,2,\dots,
\end{equation}
\begin{equation}
x_{n+1}=\frac{\alpha + x_n +x_{n-1} }{x_{n-2}},\quad n=0,1,2,\dots,
\end{equation}
with $\alpha>0$ and positive initial conditions.
Equation (1) is known by the cognomen Lyness' equation. Equation (2) is known by the cognomen Todd's equation and has also been referred to as the third order Lyness' recurrence, see for example \cite{gsm3}.
Equations (1) and (2) are discussed in the following references: [2], [4], [6]-[8], [12], [13], [15]-[26], and [38]-[40]. In \cite{klr} it is first shown for Equations (1) and (2) that we have an algebraic invariant in each case, particularly for Equation (1) we have:
$$\left(\alpha + x_{n-1}+ x_{n}\right)\left(1+\frac{1}{x_{n-1}}\right)\left(1+\frac{1}{x_{n}}\right)= constant.$$
For Equation (2) we have the algebraic invariant,
$$\left(\alpha + x_{n-2} + x_{n-1} + x_{n}\right)\left(1+\frac{1}{x_{n}}\right)\left(1+\frac{1}{x_{n-1}}\right)\left(1+\frac{1}{x_{n-2}}\right)= constant.$$\par
Now, in the case of Equation (1), it has been shown in the unpublished paper of Zeeman \cite{z} that the map induced by Equation (1) is conjugated to a rotation.
Moreover, in \cite{gsm3}, it has been shown that the three dimensional case given by Equation (2) is also conjugated to a rotation. It has been shown, see \cite{z} and \cite{gsm3}, that in both Equations (1) and (2), the phase space of the associated dynamical system is foliated by invariant curves. 
These invariant curves, which comprise the leaves of the foliation, are algebraic curves which degenerate into isolated points in some places.   \par
The Lyness invariants can be generalized for the following $k^{th}$ order rational difference equation, sometimes called the $k^{th}$ order Lyness equation:
\begin{equation}
x_{n+1}=\frac{\alpha + \sum^{k-1}_{i=0}x_{n-i} }{x_{n-k}},\quad n=0,1,2,\dots.
\end{equation}
with $\alpha\geq 0$ and positive initial conditions. With this generalization we obtain the following algebraic invariant:
$$\left(\prod^{k}_{i=0}\left( \frac{1}{x_{n-i}} +1 \right)\right)\left( \alpha + \sum^{k}_{i=0}x_{n-i} \right)= constant.$$
Significant work has been done on this equation, see for example \cite{br2} and \cite{gsm5}. Further invariants are given in \cite{gki} for $k$ sufficiently large. 
Recently geometric objects have been used effectively to obtain information about rational difference equations with invariants. A good example of this is the use of the Lie symmetry of the associated map by A. Cima, A. Gasull, and V. Ma\~{n}osa in \cite{gsm3} and \cite{gsm4}.
Another technique, which makes use of algebraic geometry, can be found in \cite{ag}.\par
The forbidden set of a rational difference equation is the set of initial conditions which eventually map to a singularity. Finding such sets has become a topic of recent interest in the literature, see for example [5], [11], [27]-[37]. Few techniques for determining forbidden sets are known. One such technique is the use of a semiconjugate factorization, see [29], [30], [32], and [34]-[37] for more on this topic.
In Section 4, we will find invariants for the given second order difference equations which allow us to perform a reduction of order in each case. In each case, the second order rational difference equation is transformed, via the invariant, to a family of first order linear fractional rational difference equations. 
Since the forbidden set and closed form solution is known for first order linear fractional rational difference equations, we use this information to find the forbidden set and closed form solution for the equations given in Section 4. For the reader's convenience, the forbidden set and closed form solution for first order linear fractional rational difference equations are presented in Section 3.

\section{The Riccati Difference Equation}
In this section, we briefly summarize the known results on the Riccati difference equation, see \cite{gks}, \cite{riccati} and \cite{kulenovicladas}. The branch cut for the complex square root will be taken to be the negative real numbers for the remainder of the article. 
\begin{thm}
 Consider the rational difference equation
$$x_{n+1}=\frac{\alpha + \beta x_{n}}{A+ Bx_{n}},\quad n=0,1,\dots,$$
where the parameters $\alpha , \beta , A, B$ and the initial condition $x_{0}$ are complex numbers.
There are seven possibilities.
\begin{enumerate}
\item Suppose $A=B=0$, then $\mathfrak{F}=\mathbb{C}$.
\item Suppose $B=0$, and $A\neq 0$, then $x_{n+1}=\frac{\alpha + \beta x_{n}}{A}$. So $\mathfrak{F}=\emptyset$ and $x_{n}=\frac{\beta^{n}x_{0}}{A^{n}}+\sum^{n}_{i=1}\frac{\alpha \beta^{n-1}}{A^{n}}$ for all $n\geq 1$.
\item Suppose $B\neq 0$ and $\alpha B - \beta A = 0$, then the forbidden set, $\mathfrak{F}= \{\frac{-A}{B}\}$ and $x_{n}=\frac{\beta}{B}$ for all $n\geq 1$ whenever $x_{0}\not\in \{\frac{-A}{B}\}$.
\item Suppose $B\neq 0$, $\alpha B - \beta A \neq 0$, and $\beta +A =0$, then the forbidden set, $\mathfrak{F}= \{\frac{-A}{B}\}$. Furthermore, $x_{2n+1}=\frac{\alpha + \beta x_{0}}{A+ B x_{0}}$ for all $n\geq 0$, and $x_{2n}=x_{0}$ for all $n\geq 0$, whenever $x_{0}\not\in  \{\frac{-A}{B}\}$.
\item Suppose $B\neq 0$, $\alpha B - \beta A \neq 0$, $\beta +A \neq 0$, and $\frac{\beta A - \alpha B}{(\beta + A)^{2}}\in \mathbb{C}\setminus [\frac{1}{4},\infty)$, and let $$w_{-}=\frac{1-\sqrt{1-4\left(\frac{\beta A - \alpha B}{(\beta + A)^{2}}\right)}}{2}\quad and \quad w_{+}=\frac{1+\sqrt{1-4\left(\frac{\beta A - \alpha B}{(\beta + A)^{2}}\right)}}{2},$$
then the forbidden set, $\mathfrak{F}= \left\{ \frac{\beta +A}{B}\left(\frac{w^{n-1}_{+}-w^{n-1}_{-}}{w^{n}_{+}-w^{n}_{-}}\right)w_{+}w_{-} - \frac{A}{B}|n\in\mathbb{N}\right\}$. Furthermore 
$$x_{n}=\frac{\beta +A}{B}\left(\frac{(\frac{Bx_{0}+A}{\beta + A}-w_{-})w^{n+1}_{+}-(w_{+}-\frac{Bx_{0}+A}{\beta + A})w^{n+1}_{-}}{(\frac{Bx_{0}+A}{\beta + A}-w_{-})w^{n}_{+}-(w_{+}-\frac{Bx_{0}+A}{\beta + A})w^{n}_{-}}\right) - \frac{A}{B},\quad for\; all\; n\in\mathbb{N},$$
whenever $x_{0}\not\in \left\{ \frac{\beta +A}{B}\left(\frac{w^{n-1}_{+}-w^{n-1}_{-}}{w^{n}_{+}-w^{n}_{-}}\right)w_{+}w_{-} - \frac{A}{B}|n\in\mathbb{N}\right\}$.
\item Suppose $B\neq 0$, $\alpha B - \beta A \neq 0$, $\beta +A \neq 0$, and $\frac{\beta A - \alpha B}{(\beta + A)^{2}}=\frac{1}{4}$,
then the forbidden set, $\mathfrak{F}= \left\{ \frac{\beta +A}{B}\left(\frac{n-1}{2n}\right) - \frac{A}{B}|n\in\mathbb{N}\right\}$. Furthermore 
$$x_{n}=\frac{\beta +A}{B}\left(\frac{1+(\frac{2Bx_{0}+2A}{\beta + A}-1)(n+1)}{2+2(\frac{2Bx_{0}+2A}{\beta + A}-1)n}\right) - \frac{A}{B},\quad for\; all\; n\in\mathbb{N},$$
whenever $x_{0}\not\in \left\{ \frac{\beta +A}{B}\left(\frac{n-1}{2n}\right) - \frac{A}{B}|n\in\mathbb{N}\right\}$.
\item Suppose $B\neq 0$, $\alpha B - \beta A \neq 0$, $\beta +A \neq 0$, and $R=\frac{\beta A - \alpha B}{(\beta + A)^{2}}\in (\frac{1}{4},\infty)$, 
let $\phi = \arccos{\left(\frac{1}{2}\sqrt{\frac{1}{R}}\right)}$,then the forbidden set, $\mathfrak{F}= \left\{ \frac{\beta +A}{2B}\left(1-\sqrt{4R-1}\cot{\left(n\phi\right)}\right) - \frac{A}{B}|n\in\mathbb{N}\right\}$. Furthermore, for all $n\in\mathbb{N}$,
$$x_{n}=\frac{\beta +A}{B}\left(\sqrt{R}\right)\left(\frac{\sqrt{4R-1}\cos{\left((n+1)\phi\right)}-(\frac{2Bx_{0}+2A}{\beta + A}-1)\sin{\left((n+1)\phi\right)}}{\sqrt{4R-1}\cos{\left(n\phi\right)}-(\frac{2Bx_{0}+2A}{\beta + A}-1)\sin{\left(n\phi\right)}}\right) - \frac{A}{B},$$
whenever $x_{0}\not\in \left\{ \frac{\beta +A}{2B}\left(1-\sqrt{4R-1}\cot{n\phi}\right) - \frac{A}{B}|n\in\mathbb{N}\right\}$.
\end{enumerate}

\end{thm}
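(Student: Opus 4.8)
The plan is to treat the seven possibilities in two groups. Possibilities (1)--(4) are genuinely degenerate, and I would dispatch them by direct computation. In (1) the denominator $A+Bx_n$ is identically zero, so every initial condition is forbidden and $\mathfrak{F}=\mathbb{C}$. In (2) the equation is affine and a routine induction gives the stated formula with $\mathfrak{F}=\emptyset$. In (3) the hypothesis $\alpha B-\beta A=0$ means the associated Möbius map has zero determinant, so it collapses after one step to the constant $\beta/B$, the only obstruction being the vanishing of the first denominator at $x_0=-A/B$. In (4) the hypothesis $\beta+A=0$ makes the trace of the associated matrix vanish, so by Cayley--Hamilton its square is a scalar matrix and the induced map is an involution; this yields the period-two behavior directly, and since $\alpha B-\beta A\neq 0$ no later denominator vanishes, so again $\mathfrak{F}=\{-A/B\}$.

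For the three substantive possibilities (5)--(7) I would use a single linearizing substitution and then specialize. Writing $T=\beta+A$ and $D=\beta A-\alpha B$, the shift $z_n=A+Bx_n$ converts the recurrence into the simpler Riccati form
$$z_{n+1}=T-\frac{D}{z_n},$$
and the original denominator vanishes at step $n$ exactly when $z_n=0$. Setting $z_n=p_{n+1}/p_n$ turns this into the linear second-order recurrence
$$p_{n+2}=T\,p_{n+1}-D\,p_n,$$
with characteristic equation $\lambda^2-T\lambda+D=0$. Normalizing by $T$ produces $\mu^2-\mu+R=0$ with $R=D/T^2$, whose roots are exactly the $w_\pm$ of the statement, and which satisfy $w_++w_-=1$ and $w_+w_-=R$. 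Fixing $p_0=1$ and $p_1=z_0=A+Bx_0$ determines $p_n$, the forbidden set becomes precisely $\{x_0:\ p_n=0\text{ for some }n\in\mathbb{N}\}$, and the solution is recovered from $x_n=(z_n-A)/B=(p_{n+1}/p_n-A)/B$.

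The three subcases then correspond exactly to the location of $R$. When $R\in\mathbb{C}\setminus[\tfrac14,\infty)$ the radicand $1-4R$ avoids the branch cut, so $w_+\neq w_-$ and $p_n=T^n\left(c_+w_+^{\,n}+c_-w_-^{\,n}\right)$; solving the two-by-two system from $p_0,p_1$ for $c_\pm$ in terms of $s_0:=\frac{Bx_0+A}{\beta+A}$ and imposing $p_n=0$ gives, after routine simplification using $w_++w_-=1$ and $w_+w_-=R$, both the closed form and the forbidden set of (5). When $R=\tfrac14$ the root is double, $w_\pm=\tfrac12$, so $p_n=(c_1+c_2 n)\,2^{-n}T^n$, and the same bookkeeping yields the $\frac{n-1}{2n}$ forbidden set of (6). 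When $R\in(\tfrac14,\infty)$ the radicand is a negative real, so with the stated branch cut $\sqrt{1-4R}=i\sqrt{4R-1}$ and the roots take polar form $w_\pm=\sqrt{R}\,e^{\pm i\phi}$ with $\cos\phi=\tfrac12\sqrt{1/R}$; writing $w_+^{\,n}-w_-^{\,n}$ and $w_+^{\,n}$ in polar coordinates converts the exponential solution into the trigonometric one, and the identity $\frac{\sin((n-1)\phi)}{\sin(n\phi)}=\cos\phi-\sin\phi\cot(n\phi)$ turns the zero condition $p_n=0$ into the $\cot(n\phi)$ description of (7).

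I expect the main obstacle to be case (7): the delicate step is the passage from the complex-exponential solution to a manifestly real trigonometric closed form while keeping the chosen branch of the square root consistent, verifying that $\tfrac12\sqrt{1/R}$ lies in the domain of $\arccos$, and correctly accounting for the points where $\sin(n\phi)$ (equivalently $w_+^{\,n}-w_-^{\,n}$) vanishes. A secondary but necessary point, common to (5)--(7), is checking that the set $\{x_0:p_n=0\}$ is \emph{exactly} the forbidden set, i.e.\ that such an $x_0$ produces a singularity precisely at step $n$ and not earlier; this follows from tracking that $p_0=1\neq0$ and that $z_k=p_{k+1}/p_k$ is finite and nonzero for $k<n$. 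As a consistency check, the value $n=1$ collapses each forbidden-set formula to $x_0=-A/B$, matching the zero of the very first denominator.
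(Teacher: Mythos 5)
The paper offers no proof of this theorem to compare against: Section 3 explicitly presents it as a summary of known results, deferring to the cited references on Riccati difference equations with complex coefficients. Your plan is the standard argument from that literature and is sound. The affine substitution $z_n=A+Bx_n$ does yield $z_{n+1}=T-D/z_n$ with $T=\beta+A$, $D=\beta A-\alpha B$, the projectivization $z_n=p_{n+1}/p_n$ with $p_0=1$, $p_1=A+Bx_0$ linearizes it, and the forbidden set is exactly the locus where some $p_n$ ($n\geq 1$) vanishes; I checked that solving $p_n=0$ for $s_0=\frac{Bx_0+A}{\beta+A}$ reproduces the stated forbidden sets in cases (5), (6) and (7), including the $\cot(n\phi)$ form via the identity you quote. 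You also correctly isolate the genuine delicate points: the branch-of-square-root bookkeeping in case (7), the indices $n$ with $\sin(n\phi)=0$ (equivalently $w_+^n=w_-^n$), and the ``first zero of $p_n$'' argument showing the singularity occurs at step $n$ and not before. One small addition worth making explicit in case (5): since $1-4R$ avoids the branch cut, $\sqrt{1-4R}$ has positive real part, hence $|w_+|>|w_-|$ and $w_+/w_-$ is never a root of unity, so $w_+^n\neq w_-^n$ for all $n\geq 1$ and the forbidden-set formula is well defined for every $n$.

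Be warned that when you carry out the ``routine simplification'' in case (5) you will not land exactly on the displayed closed form: the correct solution is
$$x_n=\frac{\beta+A}{B}\left(\frac{(s_0-w_-)w_+^{n+1}+(w_+-s_0)w_-^{n+1}}{(s_0-w_-)w_+^{n}+(w_+-s_0)w_-^{n}}\right)-\frac{A}{B},\qquad s_0=\frac{Bx_0+A}{\beta+A},$$
with plus signs, whereas the statement carries minus signs on the $w_-$ terms. A numerical check with $\alpha=2$, $\beta=A=B=1$, $x_0=1$ (so $x_1=3/2$) gives $3/2$ from the plus-sign formula and $4/3$ from the formula as printed, so this is a sign typo in the quoted theorem rather than an error in your plan; cases (6) and (7) as printed are consistent with the derivation.
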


\section{Using invariants to find forbidden sets}
Here we present six rational difference equations, each of order 2, which possess algebraic invariants. The invariants allow for a reduction of order in each case so that the dynamics of the equation can be described by either a family of Riccati equations, or a family of linear equations. 
The examples here have nice algebraic properties which are conducive to this type of approach. A remaining question of importance which is left to further work is whether this approach can be adapted to yield forbidden sets and explicit closed form solutions for other rational equations. \par
In the remainder of the article we make the notational convention of representing the set of inital conditions as a set in $\mathbb{C}^{2}$ with coordinates $(z_{0},z_{-1})$ this will be important for the remainder of the article, especially as it is needed to accurately describe the forbidden sets. Moreover, to accommodate the large formulae necessary to give a complete description of the forbidden sets, the forbidden sets are included in figures 1 and 2.

\begin{thm}
Consider the rational difference equation,
\begin{equation}
 z_{n+1}=\frac{z_{n}}{1+B z_{n-1}-B z_{n}},\quad n=0,1,\dots,
\end{equation}
 with $B\in \mathbb{C}\setminus \{0\}$ and with initial conditions $z_{0},z_{-1}\in \mathbb{C}$. Then the forbidden set, $\mathfrak{F}=S_{1}$, where $S_{1}$ is given in Figure 1. Also given $(z_{0},z_{-1})\notin \mathfrak{F}$ there are four possibilities:
\begin{enumerate}[i.]
\item $z_{0}=0$, in which case $z_{n}=0$ for all $n\geq 0$.
\item $z_{0}\neq 0$ and $z_{-1}=\frac{-1}{B}$, in which case $z_{2n+1}=\frac{-1}{B}$ for all $n\geq 0$ and $z_{2n}=\frac{-1}{2B+B^{2}z_{2n-2}}$ for all $n\geq 1$. Thus 
$$z_{2n}=\frac{n+2+nBz_{0}+Bz_{0}}{nB+B+nB^{2}z_{0}}-\frac{2}{B},\quad n\geq 0.$$
\item $z_{0}=\frac{-1}{B}$, in which case $z_{2n}=\frac{-1}{B}$ for all $n\geq 0$ and $z_{2n+1}=\frac{-1}{2B+B^{2}z_{2n-1}}$ for all $n\geq 0$. Thus
$$z_{2n+1}=\frac{n+3+nBz_{-1}+2Bz_{-1}}{nB+2B+nB^{2}z_{-1}+B^{2}z_{-1}}-\frac{2}{B},\quad n\geq -1.$$
\item $z_{0}\neq 0, \frac{-1}{B}$ and $z_{-1}\neq \frac{-1}{B}$, in which case $z_{n+1}=\frac{1+Bz_{n}}{C-B-B^{2}z_{n}}$ for all $n\geq 0$, where $C=\left(\frac{1}{z_{0}} + B \right)\left(1+ B z_{-1} \right)$. This implies the following:
\begin{enumerate}[a.]
\item If $\frac{B}{C}\in \mathbb{C}\setminus \left[\frac{1}{4},\infty\right)$, then call $C-B-B^{2}z_{0}-C\lambda_{1}=M_{1}$ and $C\lambda_{2}+B+B^{2}z_{0}-C=M_{2}$, and $$z_{n}=\frac{-C}{B^{2}}\left(\frac{M_{2}\lambda^{n+1}_{1}+M_{1}\lambda^{n+1}_{2}}{M_{2}\lambda^{n}_{1}+M_{1}\lambda^{n}_{2}}\right)+\frac{C}{B^{2}}-\frac{1}{B},\quad n\geq 0.$$
Where $$\lambda_{1}=\frac{1-\sqrt{1-\frac{4B}{C}}}{2},\quad and \quad \lambda_{2}=\frac{1+\sqrt{1-\frac{4B}{C}}}{2}.$$
\item If $C=4B$, then 
$$z_{n}=\frac{4+(n+1)\left(2-2Bz_{0} \right)}{nB^{2}z_{0}-2B-nB}+\frac{3}{B},\quad n\geq 0.$$
\item If $\frac{C}{B}\in (0,4)$, then call $\arccos\left(\sqrt{\frac{C}{4B}}\right)=\rho$, and for $n\geq 0$ we have,
$$z_{n}=\frac{-\sqrt{\frac{C}{B}}}{B}\left(\frac{\left(\sqrt{\frac{4B}{C}-1}\right)\cos\left((n+1)\rho\right)+(2w_{0}-1)\sin\left((n+1)\rho\right)}{\left(\sqrt{\frac{4B}{C}-1}\right)\cos\left(n\rho\right)+(2w_{0}-1)\sin\left(n\rho\right)}\right)+\frac{C-B}{B^{2}}.$$
Where $$w_{0}=\frac{-B^{2}z_{0}+C-B}{C}.$$
\end{enumerate}\end{enumerate}
\end{thm}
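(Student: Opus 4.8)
The plan is to exhibit an algebraic invariant that collapses the second-order recurrence (4) into a one-parameter family of Riccati equations, after which both the closed-form solution and the forbidden set can be read off from Theorem 1. The invariant I would use is
$$C=\left(\frac{1}{z_{n}}+B\right)\left(1+Bz_{n-1}\right),$$
which is precisely the constant appearing in case iv. First I would verify that it is preserved by (4). Since (4) gives $\frac{1}{z_{n+1}}+B=\frac{1+Bz_{n-1}}{z_{n}}$, a direct computation yields
$$\left(\frac{1}{z_{n+1}}+B\right)\left(1+Bz_{n}\right)=\frac{(1+Bz_{n-1})(1+Bz_{n})}{z_{n}}=\left(\frac{1}{z_{n}}+B\right)\left(1+Bz_{n-1}\right),$$
so $C$ is constant along every orbit on which it is defined.

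Next I would dispose of the degenerate initial data, where the invariant is unavailable. If $z_{0}=0$, then (4) forces $z_{n}=0$ for all $n$ by a one-line induction, giving case i. If $z_{-1}=-1/B$ with $z_{0}\neq 0$, the first step yields $z_{1}=-1/B$, and one then checks inductively that the odd-indexed terms are frozen at $-1/B$ while the even-indexed terms obey the first-order recurrence $z_{2n}=-1/(2B+B^{2}z_{2n-2})$; the symmetric case $z_{0}=-1/B$ freezes the even terms and leaves a first-order recurrence for the odd ones. Each such recurrence is a Riccati equation for which the invariant quantity $R$ of Theorem 1 equals $\frac14$, so case 6 of Theorem 1 applies and produces, after simplification, the Möbius-in-$n$ formulas of cases ii and iii together with the corresponding forbidden values of the free initial coordinate.

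For the generic regime $z_{0}\neq 0,-1/B$ and $z_{-1}\neq -1/B$ (case iv), I would use the invariant to eliminate $z_{n-1}$. Solving $C=(\frac{1}{z_{n}}+B)(1+Bz_{n-1})$ for $1+Bz_{n-1}$ and substituting into (4), the factorization
$$1+Bz_{n-1}-Bz_{n}=\frac{z_{n}\left(C-B-B^{2}z_{n}\right)}{1+Bz_{n}}$$
reduces (4) to the Riccati equation $z_{n+1}=\frac{1+Bz_{n}}{C-B-B^{2}z_{n}}$. Here $C\neq 0$ because both factors in its definition are nonzero, and a short induction shows $z_{n}\notin\{0,-1/B\}$ for all $n$, so the substitution is legitimate at every step and the zeros of the original denominator correspond exactly to those of $C-B-B^{2}z_{n}$. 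Matching this Riccati equation to the normal form of Theorem 1 makes the invariant quantity $R=\frac{\beta A-\alpha B}{(\beta+A)^{2}}$ equal to $B/C$, and the three possibilities $B/C\in\mathbb{C}\setminus[\frac14,\infty)$, $C=4B$, and $C/B\in(0,4)$ are precisely cases 5, 6, and 7 of Theorem 1, which yield formulas a, b, and c after simplification.

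Finally I would assemble the forbidden set. By the faithfulness of the reduction, a point $(z_{0},z_{-1})$ in the case iv regime is forbidden exactly when the initial value $z_{0}$ lies in the Riccati forbidden set of Theorem 1 for the associated parameter $C=(\frac{1}{z_{0}}+B)(1+Bz_{-1})$; its $n=1$ member recovers the line $1+Bz_{-1}-Bz_{0}=0$ on which $z_{1}$ is already undefined. Unioning these with the forbidden contributions coming from within cases i--iii (where a frozen subsequence leaves a Riccati subsequence that may hit a pole) produces $S_{1}$. The main obstacle is this last step: because $C$ depends on both coordinates, the forbidden set is not a product, and each countable family of Riccati forbidden values determines an implicit curve in the $(z_{0},z_{-1})$ plane once $C=C(z_{0},z_{-1})$ is substituted. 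Collecting all such curves over $n$ and over the three Riccati subcases, while correctly reconciling the degenerate loci and the $0/0$ coincidence at $(0,-1/B)$, is the delicate part, and is why the complete description is relegated to Figure 1.
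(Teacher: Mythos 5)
Your proposal is correct and follows essentially the same route as the paper: the same invariant $\left(\frac{1}{z_{n}}+B\right)\left(1+Bz_{n-1}\right)$, the same four-way decomposition of initial conditions, the same reduction of the generic case to the Riccati equation $z_{n+1}=\frac{1+Bz_{n}}{C-B-B^{2}z_{n}}$ with $R=\frac{B}{C}$ matched to cases 5--7 of Theorem 1, and the same assembly of the forbidden set as a union of Riccati forbidden sets fibered over the level curves of $C$. The only cosmetic differences are that the paper verifies the invariant by forward substitution into Equation (4) rather than by solving for $1+Bz_{n-1}$, and that it writes out in full the induction arguments you compress.
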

\begin{proof}
Let us begin by finding the forbidden set for our Equation (4). Let $\mathfrak{F}_{1}$ be the forbidden set with $z_{0}=0$. Let $\mathfrak{F}_{2}$ be the forbidden set with $z_{0}\neq 0$ and $z_{-1}=\frac{-1}{B}$. Let $\mathfrak{F}_{3}$ be the forbidden set with $z_{0}=\frac{-1}{B}$. Let $\mathfrak{F}_{4}$ be the forbidden set with $z_{0}\neq 0,\frac{-1}{B}$ and $z_{-1}\neq\frac{-1}{B}$.
Then the forbidden set $\mathfrak{F}=\bigcup^{4}_{i=1}\mathfrak{F}_{i}$. So we must find all $\mathfrak{F}_{i}$ with $1\leq i\leq 4$.\par

Let us begin with $\mathfrak{F}_{1}$. We have assumed $z_{0}=0$. Let us further assume that $z_{-1}\neq \frac{-1}{B}$, then $z_{1}$ is well defined and equal to $0$. Whenever $(z_{n},z_{n-1})=(0,0)$, then $z_{n+1}$ is well defined and equal to $0$. Thus, by induction, $z_{n}$ is well defined and equal to $0$ for all $n\in\mathbb{N}$.
Thus, if $z_{-1}\neq \frac{-1}{B}$, then $(0,z_{-1})\not\in \mathfrak{F}_{1}$. On the other hand, assume $z_{0}=0$ and $z_{-1}=\frac{-1}{B}$, then $z_{1}$ is not well defined and so $(0,\frac{-1}{B})\in \mathfrak{F}_{1}$. Thus, $\mathfrak{F}_{1}=\{(0,\frac{-1}{B})\}$.\par

Now, let us find $\mathfrak{F}_{2}$. In this case we have assumed $z_{0}\neq 0$ and $z_{-1}=\frac{-1}{B}$. Assume $z_{2n}$ is well defined for $0\leq n\leq N$, then we may make an induction argument with $z_{-1}$ as the base case.
Assume that $z_{2k+1}=\frac{-1}{B}$ for $k<N$, then, by our earlier assumption, $z_{2k+2}$ is well defined and
$$z_{2k+2}= \frac{-1}{2B+B^{2}z_{2k}}\neq 0.$$ So,
$$z_{2k+3}=\frac{z_{2k+2}}{1+B z_{2k+1}-B z_{2k+2}}= \frac{z_{2k+2}}{-B z_{2k+2}}= \frac{-1}{B}.$$
By this induction argument, $z_{2n+1}=\frac{-1}{B}$ and assuming $z_{2N+2}$ is well defined,
$$z_{2n+2}= \frac{-1}{2B+B^{2}z_{2n}},$$
for $0\leq n\leq N$. Call the forbidden set of the following first order difference equation $\hat{\mathfrak{F}}$,
$$x_{n+1}= \frac{-1}{2B+B^{2}x_{n}}, \quad n=0,1,2,\dots.$$
Now, suppose $z_{0}\neq 0$, $z_{-1}=\frac{-1}{B}$, and $z_{0}\not\in \hat{\mathfrak{F}}$, and assume that $z_{2n}$ is well defined for $n\leq N$, then $z_{2N+1}=\frac{-1}{B}$ and 
$$1+Bz_{2N}-Bz_{2N+1}=2 + Bz_{2N}=\frac{2B + B^{2}z_{2N}}{B}\neq 0,$$
since $z_{0}\not\in \hat{\mathfrak{F}}$. Thus, $z_{2n}$ is well defined for $n\leq N+1$. By induction, $z_{n}$ is well defined for all $n\in\mathbb{N}$. Thus, $(z_{0},\frac{-1}{B})\not\in \mathfrak{F}_{2}$.\par
Now, suppose $z_{0}\neq 0$, $z_{-1}=\frac{-1}{B}$, and $z_{0}\in \hat{\mathfrak{F}}$. Further assume for the sake of contradiction that $(z_{0},\frac{-1}{B})\not\in \mathfrak{F}_{2}$. Then, since $(z_{0},\frac{-1}{B})\not\in \mathfrak{F}_{2}$, $z_{n}$ is well defined for all $n\in\mathbb{N}$, but also 
$2B+B^{2}z_{2N}=0$ for some $N\in\mathbb{N}$, since $z_{0}\in \hat{\mathfrak{F}}$. But then
$$1+Bz_{2N}-Bz_{2N+1}=2 + Bz_{2N}=\frac{2B + B^{2}z_{2N}}{B}= 0.$$ This is a contradiction. Thus, $(z_{0},\frac{-1}{B})\in \mathfrak{F}_{2}$. So $\mathfrak{F}_{2}=\left(\hat{\mathfrak{F}}\setminus \{0\}\right)\times \{\frac{-1}{B}\}$.\par
Next, let us find $\mathfrak{F}_{3}$. In this case we have assumed $z_{0}=\frac{-1}{B}$. Assume $z_{2n-1}$ is well defined for $1\leq n\leq N$, then we may make an induction argument with $z_{0}$ as the base case.
Assume that $z_{2k}=\frac{-1}{B}$ for $k<N$, then, by our earlier assumption, $z_{2k+1}$ is well defined and
$$z_{2k+1}= \frac{-1}{2B+B^{2}z_{2k}}\neq 0.$$ So,
$$z_{2k+2}=\frac{z_{2k+1}}{1+B z_{2k}-B z_{2k+1}}= \frac{z_{2k+1}}{-B z_{2k+1}}= \frac{-1}{B}.$$
By this induction argument, $z_{2n}=\frac{-1}{B}$ and assuming $z_{2N+1}$ is well defined,
$$z_{2n+1}= \frac{-1}{2B+B^{2}z_{2n}},$$
for $0\leq n\leq N$. Call the forbidden set of the following first order difference equation $\hat{\mathfrak{F}}$,
$$x_{n+1}= \frac{-1}{2B+B^{2}x_{n}}, \quad n=0,1,2,\dots.$$
Now, suppose $z_{0}=\frac{-1}{B}$ and $z_{-1}\not\in \hat{\mathfrak{F}}$, and assume that $z_{2n-1}$ is well defined for $n\leq N$, then $z_{2N}=\frac{-1}{B}$ and 
$$1+Bz_{2N-1}-Bz_{2N}=2 + Bz_{2N-1}=\frac{2B + B^{2}z_{2N-1}}{B}\neq 0,$$
since $z_{-1}\not\in \hat{\mathfrak{F}}$. Thus, $z_{2n-1}$ is well defined for $n\leq N+1$. By induction, $z_{n}$ is well defined for all $n\in\mathbb{N}$. Thus, $(\frac{-1}{B},z_{-1})\not\in \mathfrak{F}_{3}$.\par
Now, suppose $z_{0}=\frac{-1}{B}$ and $z_{-1}\in \hat{\mathfrak{F}}$. Further assume for the sake of contradiction that $(\frac{-1}{B},z_{-1})\not\in \mathfrak{F}_{3}$. Then, since $(\frac{-1}{B},z_{-1})\not\in \mathfrak{F}_{3}$, $z_{n}$ is well defined for all $n\in\mathbb{N}$, but also 
$2B+B^{2}z_{2N-1}=0$ for some $N\in\mathbb{N}$, since $z_{-1}\in \hat{\mathfrak{F}}$. But then
$$1+Bz_{2N-1}-Bz_{2N}=2 + Bz_{2N-1}=\frac{2B + B^{2}z_{2N-1}}{B}= 0.$$ This is a contradiction. Thus, $(\frac{-1}{B},z_{-1})\in \mathfrak{F}_{3}$. So, $\mathfrak{F}_{3}= \{\frac{-1}{B}\} \times \hat{\mathfrak{F}}$.\par
Finally, let us find $\mathfrak{F}_{4}$. In this case we have assumed $z_{0}\neq 0,\frac{-1}{B}$ and $z_{-1}\neq \frac{-1}{B}$. Assume $z_{n}$ is well defined for $n\leq N$, then a simple induction argument shows $z_{n}\neq 0$ for $1\leq n\leq N$. Using this we get that for $n<N$,
$$\left(\frac{1}{z_{n}} + B \right)\left(1+ B z_{n-1} \right)= \left(\frac{1+B z_{n-1}}{z_{n}}\right) \left(1+ B z_{n} \right)=$$
$$ \left(\frac{1+B z_{n-1}-Bz_{n}+Bz_{n}}{z_{n}}\right) \left(1+ B z_{n} \right)=$$
$$\left(\frac{1+B z_{n-1}-Bz_{n}}{z_{n}} + B\right) \left(1+ B z_{n} \right)=\left(\frac{1}{z_{n+1}} + B \right)\left(1+ B z_{n} \right).$$
So,
$$\left(\frac{1}{z_{n}} + B \right)\left(1+ B z_{n-1} \right)=constant,$$
for $0\leq n\leq N$. Thus, $z_{n}\neq \frac{-1}{B}$ for $n\leq N$ and assuming $z_{N+1}$ is well defined, 
$$z_{n+1}= \frac{1+Bz_{n}}{C-B-B^{2}z_{n}},$$
for $0\leq n\leq N$. Where $C=\left(\frac{1}{z_{0}} + B \right)\left(1+ B z_{-1} \right)$.
Call the forbidden set of the following first order difference equation $\mathfrak{F}_{C}$,
$$x_{n+1}= \frac{1+Bx_{n}}{C-B-B^{2}x_{n}}, \quad n=0,1,2,\dots.$$
Note that the set $\mathfrak{F}_{C}$ changes depending on the value of $C$.
Now, suppose $z_{0}\neq 0,\frac{-1}{B}$, $z_{-1}\neq \frac{-1}{B}$, $C=\left(\frac{1}{z_{0}} + B \right)\left(1+ B z_{-1} \right)$, and $z_{0}\not\in \mathfrak{F}_{C}$, and assume that $z_{n}$ is well defined for $n\leq N$. Recall that we have shown that this implies $z_{n}\neq \frac{-1}{B}$ for $n\leq N$. Then 
$$1+Bz_{N-1}-Bz_{N}=\frac{C}{B+\frac{1}{z_{N}}}-Bz_{N}=\frac{C-B - B^{2}z_{N}}{B+\frac{1}{z_{N}}}\neq 0,$$
since $z_{0}\not\in \mathfrak{F}_{C}$. Thus, $z_{n}$ is well defined for $n\leq N+1$. By induction, $z_{n}$ is well defined for all $n\in\mathbb{N}$. Thus, $(z_{0},z_{-1})\not\in \mathfrak{F}_{4}$.\par
Now, suppose $z_{0}\neq 0,\frac{-1}{B}$, $z_{-1}\neq \frac{-1}{B}$, $C=\left(\frac{1}{z_{0}} + B \right)\left(1+ B z_{-1} \right)$, and $z_{0}\in \mathfrak{F}_{C}$. Further assume for the sake of contradiction that $(z_{0},z_{-1})\not\in \mathfrak{F}_{4}$. Then, since $(z_{0},z_{-1})\not\in \mathfrak{F}_{4}$, $z_{n}$ is well defined for all $n\in\mathbb{N}$, but also 
$C-B - B^{2}z_{N}=0$ for some $N\in\mathbb{N}$, since $z_{0}\in \mathfrak{F}_{C}$. But then
$$1+Bz_{N-1}-Bz_{N}=\frac{C}{B+\frac{1}{z_{N}}}-Bz_{N}=\frac{C-B - B^{2}z_{N}}{B+\frac{1}{z_{N}}}= 0.$$ This is a contradiction. Thus, $(z_{0},z_{-1})\in \mathfrak{F}_{4}$. So, 
$$\mathfrak{F}_{4}=\bigcup_{b\neq \frac{-1}{B}} \bigcup_{C\neq 0}  \left(\left(\left(\mathfrak{F}_{C}\setminus\left\{0,\frac{-1}{B}\right\}\right)\times \{b\}\right)\cap \left\{(a,b)|C=\left(\frac{1}{a} + B \right)\left(1+ B b \right)\right\}\right).$$
Reducing the above expression, we get
$$\mathfrak{F}_{4}= \bigcup_{C\neq 0} \left\{ \left(a,\frac{Ca-Ba-1}{B^{2}a+B}\right)| a\in\mathfrak{F}_{C}\setminus\left\{0,\frac{-1}{B}\right\}\right\}.$$
From the above characterization of $\mathfrak{F}_{1},\dots ,\mathfrak{F}_{4} $ and from the facts about the forbidden sets of the Riccati difference equation in Section 3, we get $\mathfrak{F}=S_{1}$, where $S_{1}$ is given in Figure 1. Now, let us describe the behavior when $(z_{0},z_{-1})\notin \mathfrak{F}$. Our analysis of this case will be broken into four subcases as shown in the statement of Theorem 2.
Let us first consider case (i). In this case $z_{0}=0$ and also, since $(z_{0},z_{-1})\notin \mathfrak{F}$, there will never be division by zero in our solution.
It is immediately clear from these two facts and from a basic induction argument that $z_{n}=0$ for all $n\geq 0$.
Now, let us consider case (ii). In this case $z_{0}\neq 0$ and $z_{-1}=\frac{-1}{B}$. Also, since $(z_{0},z_{-1})\notin \mathfrak{F}$, there will never be division by zero in our solution.
This allows us to prove by induction that $z_{n}\neq 0$ for all $n\geq 0$. The induction argument for this piece is straightforward and is omitted. Now, we will show by induction that $z_{2n+1}=\frac{-1}{B}$ for all $n\geq 0$.
Since $z_{0}\neq 0$, we have
$$z_{1}= \frac{z_{0}}{1+B z_{-1}-B z_{0}}= \frac{z_{0}}{-B z_{0}}= \frac{-1}{B}.$$
This provides the base case for our induction argument. Now, suppose $z_{2n-1}=\frac{-1}{B}$, then since $z_{n}\neq 0$ for all $n\geq 0$, we have
$$z_{2n+1}=\frac{z_{2n}}{1+B z_{2n-1}-B z_{2n}}= \frac{z_{2n}}{-B z_{2n}}= \frac{-1}{B}.$$
Thus, we have shown that $z_{2n+1}=\frac{-1}{B}$ for all $n\geq 0$. This fact and Equation (4) immediately yields,
$$z_{2n}=\frac{z_{2n-1}}{1+B z_{2n-2}-B z_{2n-1}}= \frac{-1}{2B+B^{2}z_{2n-2}}, \quad n=1,2,\dots.$$
Thus, the even terms are defined recursively by the above equation. Notice that since we have only rewritten the recursive Equation (4) we cannot have division by zero in this equation with our choice of initial conditions. In other words, $z_{2n}\neq \frac{-2}{B}$ for any $n\geq 0$.
Notice that this is a Riccati equation after a change of variables. Since we already know the closed form solution for any Riccati equation, we may obtain a closed form solution for $z_{2n}$, and thus a closed form solution for $z_{n}$ in this case. We use the known results for Riccati equations restated in Section 3 to obtain the closed form solutions in the statement of the theorem.\par
Now, let us consider case (iii). In this case, $z_{0}=\frac{-1}{B}$. Also, since $(z_{0},z_{-1})\notin \mathfrak{F}$, there will never be division by zero in our solution.
This allows us to prove by induction that $z_{n}\neq 0$ for all $n\geq 0$. The induction argument for this piece is straightforward and is omitted. Now, we will show by induction that $z_{2n}=\frac{-1}{B}$ for all $n\geq 0$.
Since $z_{1}\neq 0$, we have
$$z_{2}= \frac{z_{1}}{1+B z_{0}-B z_{1}}= \frac{z_{1}}{-B z_{1}}= \frac{-1}{B}.$$
This provides the base case for our induction argument. Now, suppose $z_{2n-2}=\frac{-1}{B}$, then since $z_{n}\neq 0$ for all $n\geq 0$, we have
$$z_{2n}=\frac{z_{2n-1}}{1+B z_{2n-2}-B z_{2n-1}}= \frac{z_{2n-1}}{-B z_{2n-1}}= \frac{-1}{B}.$$
Thus, we have shown that $z_{2n}=\frac{-1}{B}$ for all $n\geq 0$. This fact and Equation (4) immediately yields,
$$z_{2n+1}=\frac{z_{2n}}{1+B z_{2n-1}-B z_{2n}}= \frac{-1}{2B+B^{2}z_{2n-1}}, \quad n=0,1,2,\dots.$$
Thus, the odd terms are defined recursively by the above equation. Notice that since we have only rewritten the recursive Equation (4) we cannot have division by zero in this equation with our choice of initial conditions. In other words, $z_{2n-1}\neq \frac{-2}{B}$ for any $n\geq 0$.
Notice that this is a Riccati equation after a change of variables. Since we already know the closed form solution for any Riccati equation, we may obtain a closed form solution for $z_{2n+1}$, and thus a closed form solution for $z_{n}$ in this case.  We use the known results for Riccati equations restated in Section 3 to obtain the closed form solutions in the statement of the theorem.\par
Let us finally consider case (iv). In this case $z_{0}\neq 0, \frac{-1}{B}$ and $z_{-1}\neq \frac{-1}{B}$. Also, since $(z_{0},z_{-1})\notin \mathfrak{F}$, there will never be division by zero in our solution.
This allows us to prove by induction that $z_{n}\neq 0$ for all $n\geq 0$. The induction argument for this piece is straightforward and is omitted. Since there is never division by zero in our solution, and since $z_{n}\neq 0$ for all $n\geq 0$, the following algebraic computation is well defined.
$$\left(\frac{1}{z_{n}} + B \right)\left(1+ B z_{n-1} \right)= \left(\frac{1+B z_{n-1}}{z_{n}}\right) \left(1+ B z_{n} \right)=$$ $$\left(\frac{1+B z_{n-1}-Bz_{n}+Bz_{n}}{z_{n}}\right) \left(1+ B z_{n} \right)=$$ $$\left(\frac{1+B z_{n-1}-Bz_{n}}{z_{n}} + B\right) \left(1+ B z_{n} \right)=\left(\frac{1}{z_{n+1}} + B \right)\left(1+ B z_{n} \right).$$
Thus, we have the following algebraic invariant:
$$\left(\frac{1}{z_{n}} + B \right)\left(1+ B z_{n-1} \right)=constant.$$
For our fixed but arbitrary initial conditions with $z_{0}\neq 0, \frac{-1}{B}$ and $z_{-1}\neq \frac{-1}{B}$ let us denote $C=\left(\frac{1}{z_{0}} + B \right)\left(1+ B z_{-1} \right)$. Since $z_{0}\neq 0$, $C$ is well defined, and since $z_{0},z_{-1}\neq \frac{-1}{B}$, $C\neq 0$.
Since $C\neq 0$, this forces $z_{n}\neq \frac{-1}{B}$ for all $n\geq 0$. Now, we claim that since $(z_{0},z_{-1})\notin \mathfrak{F}$, $z_{n}\neq \frac{C-B}{B^{2}}$ for all $n\geq 0$. In the case where $C=B$ it follows from the fact that $z_{n}\neq 0$ for all $n\geq 0$. In the remaining case, suppose there were such an $N$, then:
$$\left(\frac{B^{2}}{C-B} + B \right)\left(1+ B z_{N-1} \right)=C.$$
This implies that $z_{N}=\frac{C-B}{B^{2}}$ and $z_{N-1}= \frac{C-2B}{B^{2}}$, but then $1+Bz_{N-1}-Bz_{N}= 0$. However, this contradicts the fact that $(z_{0},z_{-1})\notin \mathfrak{F}$. Thus, we have that $z_{n}\neq \frac{C-B}{B^{2}}$ for all $n\geq 0$.
Algebraic manipulations of our invariant yield the following:
$$z_{n}=\frac{1+Bz_{n-1}}{C-B-B^{2}z_{n-1}},\quad n\geq 1.$$
Since $z_{n}\neq \frac{C-B}{B^{2}}$ for all $n\geq 0$, this equation is well-defined for all $n\geq 1$. Thus the dynamics of $\{z_{n}\}^{\infty}_{n=-1}$ are given by a Riccati equation in this case. Since we already know the closed form solution for any Riccati equation, we may obtain a closed form solution for $\{z_{n}\}^{\infty}_{n=-1}$ in this case. We use the known results for Riccati equations restated in Section 3 to obtain the closed form solutions in the statement of the theorem.
\end{proof}

\begin{thm}
Consider the rational difference equation,
\begin{equation}
 z_{n+1}=\frac{z_{n-1}}{1+B z_{n}-B z_{n-1}},\quad n=0,1,\dots,
\end{equation}
 with $B\in \mathbb{C}\setminus \{0\}$ and with initial conditions $z_{0},z_{-1}\in \mathbb{C}$. Then the forbidden set, $\mathfrak{F}=S_{2}$, where $S_{2}$ is given in Figure 1.
 Also, given $(z_{0},z_{-1})\notin \mathfrak{F}$, there are four possibilities:
\begin{enumerate}[i.]
\item $z_{0}=0$, in which case $z_{2n}=0$ for all $n\geq 0$ and $z_{2n+1}=\frac{z_{2n-1}}{1-Bz_{2n-1}}$ for all $n\geq 0$. This implies 
$$z_{2n+1}=\frac{1}{-B}\left(\frac{1-(n+2)Bz_{-1}}{1-(n+1)Bz_{-1}}\right)+\frac{1}{B},\quad n\geq -1.$$
\item $z_{-1}=0$, in which case $z_{2n+1}=0$ for all $n\geq 0$ and $z_{2n}=\frac{z_{2n-2}}{1-Bz_{2n-2}}$ for all $n\geq 1$. This implies 
$$z_{2n}=\frac{1}{-B}\left(\frac{1-(n+1)Bz_{0}}{1-nBz_{0}}\right)+\frac{1}{B},\quad n\geq 0.$$
\item $z_{0}=\frac{-1}{B}$, in which case $z_{n}=\frac{-1}{B}$ for all $n\geq 0$.
\item $z_{0}\neq 0, \frac{-1}{B}$ and $z_{-1}\neq 0$, in which case $z_{n+1}=\frac{1}{Cz_{n}-B}$ for all $n\geq 0$, where $C=\left(\frac{1}{z_{0}} + B \right)\left(\frac{1}{z_{-1}} \right)$. This implies the following:
\begin{enumerate}[a.]
\item If $\frac{-C}{B^{2}}\in \mathbb{C}\setminus \left[\frac{1}{4},\infty\right)$, then $$z_{n}=\frac{-B}{C}\left(\frac{(B\lambda_{2}+Cz_{0}-B)\lambda^{n+1}_{1}+(B-Cz_{0}-B\lambda_{1})\lambda^{n+1}_{2}}{(B\lambda_{2}+Cz_{0}-B)\lambda^{n}_{1}+(B-Cz_{0}-B\lambda_{1})\lambda^{n}_{2}}\right)+\frac{B}{C},\quad n\geq 0.$$
Where $$\lambda_{1}=\frac{1-\sqrt{1+\frac{4C}{B^{2}}}}{2},\quad and \quad \lambda_{2}=\frac{1+\sqrt{1+\frac{4C}{B^{2}}}}{2}.$$
\item If $\frac{-C}{B^{2}}=\frac{1}{4}$, then 
$$z_{n}=\frac{-B}{C}\left(\frac{-B+(n+1)\left(2Cz_{0}-B \right)}{-2B+4nCz_{0}-2nB}\right)+\frac{B}{C},\quad n\geq 0.$$
\item If $\frac{-C}{B^{2}}\in \left(\frac{1}{4},\infty\right)$, then call $B\sqrt{\frac{-4C}{B^{2}}-1}=D$ and call $\arccos\left(\sqrt{\frac{-B^{2}}{4C}}\right)=\rho$ and for $n\geq 0$, we have
$$z_{n}=\sqrt{\frac{-B^{2}}{C}}\left(\frac{D\cos\left((n+1)\rho\right)+(B-2Cz_{0})\sin\left((n+1)\rho\right)}{BD\cos\left(n\rho\right)+(B^{2}-2CBz_{0})\sin\left(n\rho\right)}\right)+\frac{B}{C}.$$
\end{enumerate}
\end{enumerate}
\end{thm}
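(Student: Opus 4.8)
The plan is to follow exactly the strategy used in the preceding theorem for Equation (4): find an algebraic invariant linking consecutive terms, use it to collapse the second-order recurrence to a family of first-order Riccati equations, and then read off both the forbidden set and the closed-form solution from the Riccati results of Section 3. First I would locate and verify the invariant. From Equation (5), $\frac{1}{z_{n+1}} = \frac{1+Bz_{n}-Bz_{n-1}}{z_{n-1}}$, so that
\[
\frac{1}{z_{n+1}}+B=\frac{1+Bz_{n}}{z_{n-1}},
\]
and therefore
\[
\left(\frac{1}{z_{n+1}}+B\right)\frac{1}{z_{n}}=\frac{1+Bz_{n}}{z_{n-1}z_{n}}=\left(\frac{1}{z_{n}}+B\right)\frac{1}{z_{n-1}}.
\]
Hence $\left(\frac{1}{z_{n}}+B\right)\frac{1}{z_{n-1}}$ is invariant wherever the terms are nonzero. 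Solving $\left(\frac{1}{z_{n}}+B\right)\frac{1}{z_{n-1}}=C$ for $z_{n}$ gives the single Riccati recurrence $z_{n+1}=\frac{1}{Cz_{n}-B}$ of case (iv), with $C=\left(\frac{1}{z_{0}}+B\right)\frac{1}{z_{-1}}$.

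Next I would determine the forbidden set by splitting according to where the invariant degenerates, exactly as in the previous proof. The invariant is undefined precisely when $z_{0}=0$ or $z_{-1}=0$, and it vanishes ($C=0$) precisely when $z_{0}=\frac{-1}{B}$; these three degeneracies give cases (i), (ii), (iii), and the complement is case (iv). Writing $\mathfrak{F}=\bigcup_{i=1}^{4}\mathfrak{F}_{i}$, I would treat each piece as follows. When $z_{0}=0$, a short induction shows the even terms vanish and the odd terms obey the first-order M\"obius recurrence $w_{n+1}=\frac{w_{n}}{1-Bw_{n}}$ with $w_{n}=z_{2n-1}$; the denominator $1+Bz_{n}-Bz_{n-1}$ reduces to $1-Bz_{2n-1}$ when producing an odd term and to $1+Bz_{2n-1}$ when producing an even term, and the latter never vanishes because the map $w\mapsto\frac{w}{1-Bw}$ never outputs $\frac{-1}{B}$. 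Thus $\mathfrak{F}_{1}$ is controlled entirely by the forbidden set of $w_{n+1}=\frac{w_{n}}{1-Bw_{n}}$ pulled back through $w_{0}=z_{-1}$. The case $z_{-1}=0$ is symmetric with even and odd reversed; the case $z_{0}=\frac{-1}{B}$ forces $z_{n}\equiv\frac{-1}{B}$ (defined as long as $z_{-1}\neq0$), so $\mathfrak{F}_{3}=\{(\frac{-1}{B},0)\}$. For case (iv) I would run the same contradiction argument as before: $z_{n}$ is defined for all $n$ iff $z_{0}$ avoids the Riccati forbidden set $\mathfrak{F}_{C}$ of $x_{n+1}=\frac{1}{Cx_{n}-B}$, after which I pull $\mathfrak{F}_{C}$ back to the $(z_{0},z_{-1})$-plane through $C=\left(\frac{1}{z_{0}}+B\right)\frac{1}{z_{-1}}$. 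Assembling the four pieces and inserting the explicit forbidden sets from Theorem 1 yields $\mathfrak{F}=S_{2}$.

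For the closed-form solutions I would simply match parameters to Theorem 1. In case (iv), $z_{n+1}=\frac{1}{Cz_{n}-B}$ is the Riccati equation with $\alpha=1$, $\beta=0$, $A=-B$, $B=C$, so the quantity $\frac{\beta A-\alpha B}{(\beta+A)^{2}}$ equals $\frac{-C}{B^{2}}$; the subcases (a), (b), (c) are then precisely cases (5), (6), (7) of Theorem 1 for this value of $R$, and substituting $x_{0}=z_{0}$ produces the stated formulas. In cases (i) and (ii) the governing subsequence satisfies $w_{n+1}=\frac{w_{n}}{1-Bw_{n}}$, which is the Riccati equation with $\alpha=0$, $\beta=1$, $A=1$, $B=-B$ and discriminant $\frac{1}{4}$; invoking case (6) of Theorem 1 (with $x_{0}=z_{-1}$, respectively $x_{0}=z_{0}$) and reindexing the even/odd subsequence gives the displayed closed forms. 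Case (iii) is immediate.

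The hard part will be bookkeeping rather than anything conceptual, namely pinning down $S_{2}$ exactly. The delicate points are: verifying that the spurious singularity condition $z_{2n\pm1}=\frac{-1}{B}$ never actually occurs, so that $\mathfrak{F}_{1}$ and $\mathfrak{F}_{2}$ depend only on the M\"obius forbidden set; correctly transporting $\mathfrak{F}_{C}$ through the nonlinear change of variables $C=\left(\frac{1}{z_{0}}+B\right)\frac{1}{z_{-1}}$ while excising the already-handled values $z_{0}\in\{0,\frac{-1}{B}\}$; and confirming that the overlaps among the four cases (for instance $z_{0}=0$ together with $z_{-1}=0$) remain consistent so the union is described cleanly. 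The algebraic simplification of the substituted Riccati formulas into the exact forms stated is routine but lengthy, and I would relegate it, as the author does, to a direct appeal to the Section 3 formulas.
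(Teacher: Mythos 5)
Your proposal is correct and follows essentially the same route as the paper: the same invariant $\left(\frac{1}{z_{n}}+B\right)\frac{1}{z_{n-1}}$, the same four-way decomposition of the forbidden set according to the degeneracies $z_{0}=0$, $z_{-1}=0$, $z_{0}=\frac{-1}{B}$, the same reduction of the even/odd subsequences to $w_{n+1}=\frac{w_{n}}{1-Bw_{n}}$ and of the generic case to $z_{n+1}=\frac{1}{Cz_{n}-B}$, and the same pullback of the Riccati forbidden sets through $C$. Your parameter matching to Theorem 1 (discriminant $\frac{-C}{B^{2}}$ in case (iv), discriminant $\frac{1}{4}$ in cases (i)--(ii)) and your observation that $w\mapsto\frac{w}{1-Bw}$ never outputs $\frac{-1}{B}$ correspond exactly to the computations the paper carries out explicitly.
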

\begin{proof}
We begin by finding the forbidden set for our Equation (5). Let $\mathfrak{F}_{1}$ be the forbidden set with $z_{0}=0$. Let $\mathfrak{F}_{2}$ be the forbidden set with $z_{-1}=0$. Let $\mathfrak{F}_{3}$ be the forbidden set with $z_{0}=\frac{-1}{B}$. Let $\mathfrak{F}_{4}$ be the forbidden set with $z_{0}\neq 0,\frac{-1}{B}$ and $z_{-1}\neq 0$.
Then the forbidden set $\mathfrak{F}=\bigcup^{4}_{i=1}\mathfrak{F}_{i}$. So we must find all $\mathfrak{F}_{i}$ with $1\leq i\leq 4$.\par
Let us begin with $\mathfrak{F}_{1}$. We have assumed $z_{0}=0$. Assume $z_{n}$ is well defined for $0\leq n\leq 2N$, then by a simple induction argument $z_{2n}=0$ for $0\leq n\leq N$ and so assuming $z_{2N+1}$ is well defined,
$$z_{2n+1}= \frac{z_{2n-1}}{1-Bz_{2n-1}},$$
for $0\leq n\leq N$. Call the forbidden set of the following first order difference equation $\hat{\mathfrak{F}}$,
$$x_{n+1}= \frac{x_{n}}{1-Bx_{n}}, \quad n=0,1,2,\dots.$$
Now, suppose $z_{0}=0$ and $z_{-1}\not\in \hat{\mathfrak{F}}$, and assume that $z_{n}$ is well defined for $n\leq 2N$, then $z_{2N}=0$ and 
$$1+Bz_{2N}-Bz_{2N-1}=1 - Bz_{2N-1}\neq 0,$$
since $z_{-1}\not\in \hat{\mathfrak{F}}$. Thus, $z_{n}$ is well defined for $n\leq 2N+1$ and
$$z_{2N+1}= \frac{z_{2N-1}}{1-Bz_{2N-1}}.$$
So, 
$$1+Bz_{2N+1}-Bz_{2N}=1 + Bz_{2N+1}= \frac{1-Bz_{2N-1} + Bz_{2N-1}}{1-Bz_{2N-1}}= \frac{1}{1-Bz_{2N-1}}\neq 0,$$
since $1 - Bz_{2N-1}\neq 0$. Thus $z_{n}$ is well defined for $n\leq 2N+2$. By induction $z_{n}$ is well defined for all $n\in\mathbb{N}$. Thus $(0,z_{-1})\not\in \mathfrak{F}_{1}$.\par
Now suppose $z_{0}=0$ and $z_{-1}\in \hat{\mathfrak{F}}$. Further assume for the sake of contradiction that $(0,z_{-1})\not\in \mathfrak{F}_{1}$. Then, since $(0,z_{-1})\not\in \mathfrak{F}_{1}$, $z_{n}$ is well defined for all $n\in\mathbb{N}$, but also 
$1-Bz_{2N-1}=0$ for some $N\in\mathbb{N}$ since $z_{-1}\in \hat{\mathfrak{F}}$. But then
$$1+Bz_{2N}-Bz_{2N-1}=1 - Bz_{2N-1}= 0.$$ This is a contradiction. Thus $(0,z_{-1})\in \mathfrak{F}_{1}$. So $\mathfrak{F}_{1}=\{0\}\times \hat{\mathfrak{F}} $.\par
Next, let us find $\mathfrak{F}_{2}$. In this case, we have assumed $z_{-1}=0$. Assume $z_{n}$ is well defined for $0\leq n\leq 2N+1$, then by a simple induction argment $z_{2n+1}=0$ for $0\leq n\leq N$ and so assuming $z_{2N+2}$ is well defined,
$$z_{2n+2}= \frac{z_{2n}}{1-Bz_{2n}},$$
for $0\leq n\leq N$. Call the forbidden set of the following first order difference equation $\hat{\mathfrak{F}}$,
$$x_{n+1}= \frac{x_{n}}{1-Bx_{n}}, \quad n=0,1,2,\dots.$$
Now, suppose $z_{-1}=0$ and $z_{0}\not\in \hat{\mathfrak{F}}$ and assume that $z_{n}$ is well defined for $n\leq 2N+1$, then $z_{2N+1}=0$ and 
$$1+Bz_{2N+1}-Bz_{2N}=1 - Bz_{2N}\neq 0,$$
since $z_{0}\not\in \hat{\mathfrak{F}}$. Thus, $z_{n}$ is well defined for $n\leq 2N+2$ and
$$z_{2N+2}= \frac{z_{2N}}{1-Bz_{2N}}.$$
So,
$$1+Bz_{2N+2}-Bz_{2N+1}=1 + Bz_{2N+2}= \frac{1-Bz_{2N} + Bz_{2N}}{1-Bz_{2N}}= \frac{1}{1-Bz_{2N}}\neq 0,$$
since $1 - Bz_{2N}\neq 0$. Thus, $z_{n}$ is well defined for $n\leq 2N+3$. By induction $z_{n}$ is well defined for all $n\in\mathbb{N}$. Thus, $(z_{0},0)\not\in \mathfrak{F}_{2}$.\par
Now, suppose $z_{-1}=0$ and $z_{0}\in \hat{\mathfrak{F}}$. Further assume for the sake of contradiction that $(z_{0},0)\not\in \mathfrak{F}_{2}$. Then, since $(z_{0},0)\not\in \mathfrak{F}_{2}$, $z_{n}$ is well defined for all $n\in\mathbb{N}$, but also 
$1-Bz_{2N}=0$ for some $N\in\mathbb{N}$, since $z_{0}\in \hat{\mathfrak{F}}$. But then
$$1+Bz_{2N+1}-Bz_{2N}=1 - Bz_{2N}= 0.$$ This is a contradiction. Thus, $(z_{0},0)\in \mathfrak{F}_{2}$. So $\mathfrak{F}_{2}= \hat{\mathfrak{F}}\times \{0\} $.\par
Now, let us find $\mathfrak{F}_{3}$. We have assumed $z_{0}=\frac{-1}{B}$. Let us further assume that $z_{-1}\neq 0$, then $z_{1}$ is well defined and equal to $\frac{-1}{B}$. Whenever $(z_{n},z_{n-1})=(\frac{-1}{B},\frac{-1}{B})$ then $z_{n+1}$ is well defined and equal to $\frac{-1}{B}$. Thus, by induction, $z_{n}$ is well defined and equal to $\frac{-1}{B}$ for all $n\in\mathbb{N}$.
Thus, if $z_{-1}\neq 0$, then $(\frac{-1}{B},z_{-1})\not\in \mathfrak{F}_{3}$. On the other hand, assume $z_{0}=\frac{-1}{B}$ and $z_{-1}=0$. Then $z_{1}$ is not well defined, and so $(\frac{-1}{B},0)\in \mathfrak{F}_{3}$. Thus, $\mathfrak{F}_{3}=\{(\frac{-1}{B},0)\}$.\par
Finally, let us find $\mathfrak{F}_{4}$. In this case we have assumed $z_{0}\neq 0,\frac{-1}{B}$ and $z_{-1}\neq 0$. Assume $z_{n}$ is well defined for $n\leq N$, then a simple induction argument shows $z_{n}\neq 0$ for $n\leq N$, and so for $n < N$,
$$\left(\frac{1}{z_{n+1}} + B \right)\left(\frac{1}{z_{n}} \right)= \left(\frac{1+ B z_{n} - B z_{n-1}}{z_{n-1}} + B \right) \left(\frac{1}{ z_{n}} \right)=$$
$$ \left(\frac{1+ B z_{n}}{z_{n-1}} \right) \left(\frac{1}{ z_{n}} \right)=\left(\frac{1+ B z_{n}}{z_{n}} \right) \left(\frac{1}{ z_{n-1}} \right)=\left(\frac{1}{z_{n}} + B \right)\left(\frac{1}{z_{n-1}} \right).$$
So,
$$\left(\frac{1}{z_{n}} + B \right)\left(\frac{1}{z_{n-1}} \right)=constant,$$
for $0\leq n\leq N$. Thus, $z_{n}\neq \frac{-1}{B}$ for $n\leq N$ and assuming $z_{N+1}$ is well defined,
$$z_{n+1}= \frac{1}{Cz_{n}-B},$$
for $0\leq n\leq N$. Where $C=\left(\frac{1}{z_{0}} + B \right)\left(\frac{1}{z_{-1}} \right)$.
Call the forbidden set of the following first order difference equation $\mathfrak{F}_{C}$,
$$x_{n+1}= \frac{1}{Cx_{n}-B}, \quad n=0,1,2,\dots.$$
Note that the set $\mathfrak{F}_{C}$ changes depending on the value of $C$.
Now, suppose $z_{0}\neq 0,\frac{-1}{B}$, $z_{-1}\neq 0$, $C=\left(\frac{1}{z_{0}} + B \right)\left(\frac{1}{z_{-1}} \right)$, and $z_{0}\not\in \mathfrak{F}_{C}$, and assume that $z_{n}$ is well defined for $n\leq N$. Then, notice that $$z_{N-1}=\frac{1+Bz_{N}}{Cz_{N}}.$$
Thus $$1+Bz_{N}-Bz_{N-1}=1+Bz_{N}-\left(\frac{B+B^{2}z_{N}}{Cz_{N}}\right)=\frac{Cz_{N}+BCz^{2}_{N}-B-B^{2}z_{N}}{Cz_{N}}$$$$=\frac{(Cz_{N}-B)(Bz_{N}+1)}{Cz_{N}}\neq 0,$$
since $z_{0}\not\in \mathfrak{F}_{C}$ and $z_{n}\neq 0,\frac{-1}{B}$ for all $n\leq N$. Thus, $z_{n}$ is well defined for $n\leq N+1$. By induction, $z_{n}$ is well defined for all $n\in\mathbb{N}$. Thus $(z_{0},z_{-1})\not\in \mathfrak{F}_{4}$.\newline
Now, suppose $z_{0}\neq 0,\frac{-1}{B}$, $z_{-1}\neq 0$, $C=\left(\frac{1}{z_{0}} + B \right)\left(\frac{1}{z_{-1}} \right)$, and $z_{0}\in \mathfrak{F}_{C}$. Further assume for the sake of contradiction that $(z_{0},z_{-1})\not\in \mathfrak{F}_{4}$. Then, since $(z_{0},z_{-1})\not\in \mathfrak{F}_{4}$, $z_{n}$ is well defined for all $n\in\mathbb{N}$, but also 
$Cz_{N}-B=0$ for some $N\in\mathbb{N}$ since $z_{0}\in \mathfrak{F}_{C}$. But then
$$1+Bz_{N}-Bz_{N-1}=1+Bz_{N}-\left(\frac{B+B^{2}z_{N}}{Cz_{N}}\right)=\frac{(Cz_{N}-B)(Bz_{N}+1)}{Cz_{N}}= 0.$$ This is a contradiction. Thus, $(z_{0},z_{-1})\in \mathfrak{F}_{4}$. 
So, $$\mathfrak{F}_{4}=\bigcup_{b\neq 0} \bigcup_{C\neq 0}  \left(\left(\left(\mathfrak{F}_{C}\setminus\left\{0,\frac{-1}{B}\right\}\right)\times \{b\}\right)\cap \left\{(a,b)|C=\left(\frac{1}{a} + B \right)\left(\frac{1}{b} \right)\right\}\right).$$
Reducing the above expression, we get
$$\mathfrak{F}_{4}= \bigcup_{C\neq 0} \left\{ \left(a,\frac{Ba+1}{Ca}\right)| a\in\mathfrak{F}_{C}\setminus\left\{0,\frac{-1}{B}\right\}\right\}.$$

From the above characterization of $\mathfrak{F}_{1},\dots ,\mathfrak{F}_{4} $ and from the facts about the forbidden sets of the Riccati difference equation in Section 3, we get $\mathfrak{F}=S_{2}$, where $S_{2}$ is given in Figure 1.

Now, we will describe the behavior when $(z_{0},z_{-1})\notin \mathfrak{F}$. Our analysis of this case will be broken into four subcases as shown in the statement of Theorem 3.
Let us first consider case (i). In this case, $z_{0}=0$ and also since $(z_{0},z_{-1})\notin \mathfrak{F}$ there will never be division by zero in our solution.
It is immediately clear from these two facts and from a basic induction argument that $z_{2n}=0$ for all $n\geq 0$. This fact and Equation (5) immediately yields,
$$z_{2n+1}=\frac{z_{2n-1}}{1+Bz_{2n}-Bz_{2n-1}}=\frac{z_{2n-1}}{1-Bz_{2n-1}} , \quad n=0,1,2,\dots.$$
Thus, the odd terms are defined recursively by the above equation. Notice that since we have only rewritten the recursive Equation (5), we cannot have division by zero in this equation with our choice of initial conditions. In other words, $z_{2n+1}\neq \frac{1}{B}$ for any $n\geq 0$.
Notice that this is a Riccati equation after a change of variables. Since we already know the closed form solution for any Riccati equation, we may obtain a closed form solution for $z_{2n+1}$, and thus a closed form solution for $z_{n}$ in this case. We use the known results for Riccati equations restated in Section 3 to obtain the closed form solutions in the statement of the theorem.\par
Now, let us consider case (ii). In this case $z_{-1}=0$ and also since $(z_{0},z_{-1})\notin \mathfrak{F}$ there will never be division by zero in our solution.
It is immediately clear from these two facts and from a basic induction argument that $z_{2n+1}=0$ for all $n\geq 0$. This fact and Equation (5) immediately yields,
$$z_{2n}=\frac{z_{2n-2}}{1+Bz_{2n-1}-Bz_{2n-2}}=\frac{z_{2n-2}}{1-Bz_{2n-2}} , \quad n=1,2,\dots.$$
Thus, the even terms are defined recursively by the above equation. Notice that since we have only rewritten the recursive Equation (5), we cannot have division by zero in this equation with our choice of initial conditions. In other words, $z_{2n}\neq \frac{1}{B}$ for any $n\geq 0$.
Notice that this is a Riccati equation after a change of variables. Since we already know the closed form solution for any Riccati equation, we may obtain a closed form solution for $z_{2n}$, and thus a closed form solution for $z_{n}$ in this case. We use the known results for Riccati equations restated in Section 3 to obtain the closed form solutions in the statement of the theorem.\par
Now, let us consider case (iii). In this case, $z_{0}=\frac{-1}{B}$. Also, since $(z_{0},z_{-1})\notin \mathfrak{F}$, there will never be division by zero in our solution.
This allows us to prove by induction that $z_{n} =  \frac{-1}{B}$ for all $n\geq 0$. The case $z_{0}= \frac{-1}{B}$ provides the base case. Assume that $z_{n}= \frac{-1}{B}$, since there is never division by zero from Equation (5) we get,
$$ z_{n+1}=\frac{z_{n-1}}{1+B z_{n}-B z_{n-1}}= \frac{z_{n-1}}{-Bz_{n-1}}= \frac{-1}{B}.$$
Thus we have shown that $z_{n} =  \frac{-1}{B}$ for all $n\geq 0$.\par
Let us finally consider case (iv). In this case, $z_{0}\neq 0, \frac{-1}{B}$ and $z_{-1}\neq 0$. Also, since $(z_{0},z_{-1})\notin \mathfrak{F}$, there will never be division by zero in our solution.
This allows us to prove by induction that $z_{n}\neq 0$ for all $n\geq 0$. The induction argument for this piece is straightforward and is omitted. Since there is never division by zero in our solution and since $z_{n}\neq 0$ for all $n\geq 0$, the following algebraic computation is well defined.
$$\left(\frac{1}{z_{n+1}} + B \right)\left(\frac{1}{z_{n}} \right)= \left(\frac{1+ B z_{n} - B z_{n-1}}{z_{n-1}} + B \right) \left(\frac{1}{ z_{n}} \right)= \left(\frac{1+ B z_{n}}{z_{n-1}} \right) \left(\frac{1}{ z_{n}} \right)=$$
$$\left(\frac{1+ B z_{n}}{z_{n}} \right) \left(\frac{1}{ z_{n-1}} \right)=\left(\frac{1}{z_{n}} + B \right)\left(\frac{1}{z_{n-1}} \right).$$
Thus, we have the following algebraic invariant:
$$\left(\frac{1}{z_{n}} + B \right)\left(\frac{1}{z_{n-1}} \right)=constant.$$
For our fixed but arbitrary initial conditions with $z_{0}\neq 0, \frac{-1}{B}$ and $z_{-1}\neq 0$, let us denote $C=\left(\frac{1}{z_{0}} + B \right)\left(\frac{1}{ z_{-1}} \right)$. Since $z_{0}\neq 0$, $C$ is well defined and since $z_{0}\neq \frac{-1}{B}$, $C\neq 0$.
Since $C\neq 0$, this forces $z_{n}\neq \frac{-1}{B}$ for all $n\geq 0$. Now, we claim that since $(z_{0},z_{-1})\notin \mathfrak{F}$, $z_{n}\neq \frac{B}{C}$ for all $n\geq 0$. In the case where $C=-B^{2}$, it follows from the fact that $z_{n}\neq \frac{-1}{B}$ for all $n\geq 0$. In the remaining case, suppose there were such an $N$, then:
$$\left(\frac{C}{B} + B \right)\left(\frac{1}{z_{N-1}} \right)=C.$$
This implies that $z_{N}=\frac{B}{C}$ and $z_{N-1}= \frac{B^{2}+C}{CB}$, but then $1+Bz_{N}-Bz_{N-1}= 0$. However, this contradicts the fact that $(z_{0},z_{-1})\notin \mathfrak{F}$. Thus, we have that $z_{n}\neq \frac{B}{C}$ for all $n\geq 0$.
Algebraic manipulations of our invariant yield the following:
$$z_{n}=\frac{1}{Cz_{n-1}- B},\quad n\geq 1.$$
Since $z_{n}\neq \frac{B}{C}$ for all $n\geq 0$, this equation is well-defined for all $n\geq 1$. Thus, the dynamics of $\{z_{n}\}^{\infty}_{n=-1}$ are given by a Riccati equation in this case. Since we already know the closed form solution for any Riccati equation, we may obtain a closed form solution for $\{z_{n}\}^{\infty}_{n=-1}$ in this case. We use the known results for Riccati equations restated in Section 3 to obtain the closed form solutions in the statement of the theorem.
\end{proof}

\begin{thm}
Consider the rational difference equation,
\begin{equation}
 z_{n+1}=\frac{z^{2}_{n}+Bz_{n} - Bz_{n-1}}{z_{n-1}},\quad n=0,1,\dots,
\end{equation}
 with $B\in \mathbb{C}$ and with initial conditions $z_{0},z_{-1}\in \mathbb{C}$. Then the forbidden set, $\mathfrak{F}=S_{3}$, where $S_{3}$ is given in Figure 2.
Also, given $(z_{0},z_{-1})\notin \mathfrak{F}$, $z_{n+1}=Cz_{n}-B$ for all $n\geq 0$, where $C=\frac{z_{0} + B }{z_{-1}}$. This implies $$z_{n}=C^{n}z_{0}-\sum^{n}_{k=1}C^{k-1}B,\quad n\geq 1.$$
\end{thm}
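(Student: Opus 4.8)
The plan is to uncover an algebraic invariant for Equation (6) that collapses it to a first-order \emph{linear} recurrence; both the closed-form solution and the forbidden set will then be read off from that reduction, exactly as in the proofs of Theorems 2 and 3, with the simplification that here the reduced equation is linear rather than Riccati.

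First I would clear the denominator in Equation (6) to obtain $z_{n+1}z_{n-1}=z_n^2+Bz_n-Bz_{n-1}$, and regroup this as $(z_{n+1}+B)z_{n-1}=z_n(z_n+B)$. Provided $z_{n-1}\neq 0$ and $z_n\neq 0$, dividing gives $\frac{z_{n+1}+B}{z_n}=\frac{z_n+B}{z_{n-1}}$, so that $\frac{z_n+B}{z_{n-1}}$ is an invariant. Writing $C=\frac{z_0+B}{z_{-1}}$, which is well defined the moment $z_{-1}\neq 0$, the invariant relation $z_n+B=Cz_{n-1}$ becomes $z_{n+1}=Cz_n-B$ for all $n\geq 0$; indeed $z_1=Cz_0-B$ already reproduces Equation (6) at $n=0$. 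Since the cleared equation $(z_{n+1}+B)z_{n-1}=z_n(z_n+B)$ holds identically, an induction with base case $z_0+B=Cz_{-1}$ propagates the invariant to every step at which $z_{n-1}\neq 0$. Solving the linear recurrence is routine and yields $z_n=C^nz_0-B\sum_{k=1}^{n}C^{k-1}$ for $n\geq 1$, the stated closed form.

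For the forbidden set I would argue that, since the only denominator in Equation (6) is $z_{n-1}$, the full orbit $z_{-1},z_0,z_1,\dots$ exists if and only if $z_m\neq 0$ for every $m\geq -1$. Two boundary cases feed both coordinate axes into $\mathfrak{F}$: if $z_{-1}=0$ then $z_1$ is already undefined, while if $z_0=0$ with $z_{-1}\neq 0$ then $z_1=-B$ is defined but $z_2$ is not. In the remaining case $z_0,z_{-1}\neq 0$ the reduction above is legitimate as long as no term vanishes, and a vanishing term $z_N=0$ with $N\geq 1$ gives $z_{N+1}=-B$ but leaves $z_{N+2}$ undefined; conversely, if the linear orbit never hits zero then every term is defined. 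I would make each direction precise by the same ``assume the orbit is defined up to step $N$, inspect the denominator, then derive a contradiction'' induction used for $\mathfrak{F}_4$ in the previous two theorems.

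The step I expect to be the real work is the explicit description of $S_3$. This requires solving $z_N=0$ along the linear orbit: for $C\neq 1$ this gives $z_0=\frac{B}{C-1}\bigl(1-C^{-N}\bigr)$, and substituting back $z_{-1}=\frac{z_0+B}{C}$ expresses the forbidden locus of the main case as a union over $N\in\mathbb{N}$ and over admissible $C\neq 0$ of explicit points in the $(z_0,z_{-1})$-plane. The degenerate exponent $C=1$ (where the geometric sum becomes $N$ and the condition reads $z_0=BN$) must be handled separately, and these families, together with the two axes $\{z_0=0\}$ and $\{z_{-1}=0\}$, assemble into the set $S_3$ displayed in Figure 2.
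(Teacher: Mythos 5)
Your proposal is correct and follows essentially the same route as the paper: the same invariant $\frac{z_n+B}{z_{n-1}}=C$ (you reach it by clearing denominators and regrouping rather than by the paper's direct computation, but it is the same identity), the same reduction to the linear recurrence $z_{n+1}=Cz_n-B$ with the same closed form, and the same forbidden-set analysis splitting into the axes $\{z_0=0\}\cup\{z_{-1}=0\}$, the generic case $C\neq 1$ with $z_0=\frac{B(C^N-1)}{C^N(C-1)}$, and the degenerate case $C=1$ with $z_0=NB$, assembled by the same two-directional induction the paper uses. No gaps.
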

\begin{proof}
Let us first consider the case where $(z_{0},z_{-1})\notin \mathfrak{F}$. In this case clearly $z_{n}\neq 0$ for $n\geq -1$ or else we would have division by zero.
Since $z_{n}\neq 0$ for all $n\geq -1$ the following algebraic computation is well defined:
$$\frac{z_{n+1}+B}{z_{n}}=\frac{\frac{z^{2}_{n}+Bz_{n} - Bz_{n-1}}{z_{n-1}}+B}{z_{n}}= \frac{z^{2}_{n}+Bz_{n}}{z_{n}z_{n-1}}= \frac{z_{n}+B}{z_{n-1}}.$$
Thus, we have the following algebraic invariant:
$$\frac{z_{n}+B}{z_{n-1}}=constant.$$
For our fixed but arbitrary initial conditions, let us denote $C=\frac{z_{0} + B }{z_{-1}}$. Since $z_{-1}\neq 0$, $C$ is well defined.
 Algebraic manipulations of our invariant yield the following:
$$z_{n}=Cz_{n-1}-B,\quad n\geq 1.$$
Thus, the dynamics of $\{z_{n}\}^{\infty}_{n=-1}$ are given by a linear equation in this case. Since we already know the closed form solution for any linear equation, we may obtain a closed form solution for $\{z_{n}\}^{\infty}_{n=-1}$ in this case. 
The resulting closed form solution is $$z_{n}=C^{n}z_{0}-\sum^{n}_{k=1}C^{k-1}B,\quad n\geq 1.$$
Now, we must find the forbidden set for our Equation (6). Let $\mathfrak{F}$ be the forbidden set.
Suppose $z_{0},z_{-1}\neq 0$, $C=\frac{z_{0} + B }{z_{-1}}\neq 1$, and $z_{0}\not\in \left\{\frac{B-BC^{n}}{C^{n}-C^{n+1}}|n\in\mathbb{N}\right\}$, and assume that $z_{n}$ is well defined for $n\leq N$. 
Then $$z_{N-1}\neq 0,$$
since $z_{0}\not\in \{\frac{B-BC^{n}}{C^{n}-C^{n+1}}|n\in\mathbb{N}\}$. Thus, $z_{n}$ is well defined for $n\leq N+1$. By induction, $z_{n}$ is well defined for all $n\in\mathbb{N}$. Thus, $(z_{0},z_{-1})\not\in \mathfrak{F}$.\par
Now, suppose $z_{0},z_{-1}\neq 0$, $C=\frac{z_{0} + B }{z_{-1}}\neq 1$, and $z_{0}\in \left\{\frac{B-BC^{n}}{C^{n}-C^{n+1}}|n\in\mathbb{N}\right\}$. Further assume for the sake of contradiction that $(z_{0},z_{-1})\not\in \mathfrak{F}$. Then since $(z_{0},z_{-1})\not\in \mathfrak{F}$, $z_{n}$ is well defined for all $n\in\mathbb{N}$, but also 
$z_{N}=0$ for some $N\in\mathbb{N}$, since $z_{0}\in \left\{\frac{B-BC^{n}}{C^{n}-C^{n+1}}|n\in\mathbb{N}\right\}$. This is a contradiction. Thus, $(z_{0},z_{-1})\in \mathfrak{F}$. \par
Suppose $z_{0},z_{-1}\neq 0$, $C=\frac{z_{0} + B }{z_{-1}}= 1$, and $z_{0}\not\in \left\{nB|n\in\mathbb{N}\right\}$, and assume that $z_{n}$ is well defined for $n\leq N$. 
Then $$z_{N-1}\neq 0,$$
since $z_{0}\not\in \{nB|n\in\mathbb{N}\}$. Thus, $z_{n}$ is well defined for $n\leq N+1$. By induction, $z_{n}$ is well defined for all $n\in\mathbb{N}$. Thus, $(z_{0},z_{-1})\not\in \mathfrak{F}$.\par
Now, suppose $z_{0},z_{-1}\neq 0$, $C=\frac{z_{0} + B }{z_{-1}}=1$, and $z_{0}\in \left\{nB|n\in\mathbb{N}\right\}$. Further assume for the sake of contradiction that $(z_{0},z_{-1})\not\in \mathfrak{F}$. Then, since $(z_{0},z_{-1})\not\in \mathfrak{F}$, $z_{n}$ is well defined for all $n\in\mathbb{N}$, but also 
$z_{N}=0$ for some $N\in\mathbb{N}$, since $z_{0}\in \left\{nB|n\in\mathbb{N}\right\}$. This is a contradiction. Thus, $(z_{0},z_{-1})\in \mathfrak{F}$. \par
Finally, suppose either $z_{0}=0$ or $z_{-1}=0$, then $(z_{0},z_{-1})\in \mathfrak{F}$. Thus, $\mathfrak{F}=S_{3}$, where $S_{3}$ is given in Figure 2.
\end{proof}

\begin{thm}
Consider the rational difference equation,
\begin{equation}
 z_{n+1}=\frac{z^{2}_{n} + B z_{n}}{ z_{n-1} + B},\quad n=0,1,\dots,
\end{equation}
with $B\in \mathbb{C}\setminus \{0\}$ and with initial conditions $z_{0},z_{-1}\in \mathbb{C}$. Then the forbidden set, $\mathfrak{F}=S_{4}$, where $S_{4}$ is given in Figure 2.
Also, given $(z_{0},z_{-1})\notin \mathfrak{F}$ there are two possibilities:
\begin{enumerate}[i.]
\item $z_{0}=0$, in which case $z_{n}=0$ for all $n\geq 0$;
\item $z_{0}\neq 0$, in which case $z_{n+1}=\frac{z_{n}+B}{C}$ for all $n\geq 0$, where $C=\frac{z_{-1}+B}{z_{0}}$. This implies $$z_{n}=\frac{z_{0}}{C^{n}}+\sum^{n}_{k=1}\frac{B}{C^{k}},\quad n\geq 1.$$
\end{enumerate}
\end{thm}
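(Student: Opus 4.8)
The plan is to follow the same template as the previous theorems: exhibit an algebraic invariant, use it to collapse the second-order equation to a first-order equation, solve that equation in closed form, and then read off the forbidden set from the solution. First I would guess, by analogy with Theorem 4, that the quantity $\frac{z_{n-1}+B}{z_n}$ is invariant, and verify this by the direct computation
$$\frac{z_n+B}{z_{n+1}}=\frac{z_n+B}{\frac{z_n^2+Bz_n}{z_{n-1}+B}}=\frac{(z_n+B)(z_{n-1}+B)}{z_n(z_n+B)}=\frac{z_{n-1}+B}{z_n},$$
which is valid whenever $z_n\neq 0$. Writing $C=\frac{z_{-1}+B}{z_0}$ for the common value, the invariant rewrites Equation (7) as the first-order affine recurrence $z_{n+1}=\frac{z_n+B}{C}$, exactly as claimed in case (ii).

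Next I would dispose of the two behavioral cases for $(z_0,z_{-1})\notin\mathfrak{F}$. For case (i), $z_0=0$: since $(z_0,z_{-1})\notin\mathfrak{F}$ there is never division by zero, so each denominator $z_{n-1}+B$ is nonzero, and a one-line induction gives $z_n=0$ for all $n$ because the numerator $z_n^2+Bz_n$ vanishes when $z_n=0$. For case (ii), $z_0\neq 0$: being outside $\mathfrak{F}$ forces $z_m\neq -B$ for every $m\geq -1$ (otherwise a later denominator would vanish), so $C$ is well defined and nonzero; an induction using $z_{n+1}=\frac{z_n(z_n+B)}{z_{n-1}+B}$ together with $z_n\neq 0,-B$ shows $z_n\neq 0$ for all $n$, which legitimizes the invariant computation above. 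Solving the affine recurrence $z_{n+1}=\frac1C z_n+\frac BC$ by the standard method then yields the stated closed form $z_n=\frac{z_0}{C^n}+\sum_{k=1}^n\frac{B}{C^k}$.

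Finally I would determine the forbidden set $S_4$. The only source of a singularity in Equation (7) is a vanishing denominator, i.e. $z_{n-1}=-B$; hence $(z_0,z_{-1})\in\mathfrak{F}$ exactly when the trajectory attains the value $-B$. The line $z_{-1}=-B$ belongs to $\mathfrak{F}$ outright, since then $z_1$ is already undefined. For the remaining points I would substitute the closed form and solve $z_n=-B$ for $n$: when $C\neq 1$ this should simplify cleanly to $z_0=-B\frac{C^{n+1}-1}{C-1}$, and when $C=1$ (so the recurrence is $z_{n+1}=z_n+B$) to $z_0=-(n+1)B$. Re-expressing these through $z_{-1}=Cz_0-B$ gives the description of $S_4$ in Figure 2, and the two inclusions $\mathfrak{F}\subseteq S_4$ and $S_4\subseteq\mathfrak{F}$ I would prove by the same well-definedness induction and proof-by-contradiction scheme used for the forbidden set in Theorem 4 (and the $\mathfrak{F}_i$ in Theorems 2 and 3).

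The routine parts—verifying the invariant and solving the linear recurrence—are immediate. The main obstacle is the forbidden-set bookkeeping: correctly isolating the degenerate loci (the line $z_{-1}=-B$, the value $z_0=-B$ appearing as the $n=0$ instance, and the interaction of $z_0=0$ with the line $z_{-1}=-B$ at the single point $(0,-B)$), splitting the generic analysis into the $C=1$ and $C\neq 1$ branches, and translating the fiberwise condition ``$z_n=-B$ for some $n$'' (phrased in $z_0$ and $C$) back into a clean subset of the $(z_0,z_{-1})$-plane matching the set $S_4$ drawn in Figure 2.
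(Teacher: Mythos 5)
Your proposal is correct and follows essentially the same route as the paper: verify the invariant $\frac{z_{n-1}+B}{z_{n}}=C$, reduce to the affine recurrence $z_{n+1}=\frac{z_{n}+B}{C}$, solve it, and recover $S_{4}$ by splitting into the $C\neq 1$ and $C=1$ branches together with the degenerate loci $z_{0}=-B$, $z_{-1}=-B$, and the $z_{0}=0$ case. The only cosmetic difference is that you obtain the exceptional $z_{0}$-values by solving $z_{n}=-B$ from the closed form, while the paper states them directly and confirms them with the induction/contradiction scheme you also invoke.
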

\begin{proof}
Let us first consider the case where $(z_{0},z_{-1})\notin \mathfrak{F}$. Our analysis of this case will be broken into two subcases as shown in the statement of Theorem 5.
Let us first consider case (i). In this case, $z_{0}=0$ and also since $(z_{0},z_{-1})\notin \mathfrak{F}$, there will never be division by zero in our solution.
It is immediately clear from these two facts and from a basic induction argument that $z_{n}=0$ for all $n\geq 0$. 

Now, let us consider case (ii). In this case $z_{0}\neq 0$. Also, since $(z_{0},z_{-1})\notin \mathfrak{F}$, there will never be division by zero in our solution.
Thus, $z_{n}\neq -B$ for all $n\geq -1$ or else we would have division by zero.
This allows us to prove by induction that $z_{n}\neq 0$ for all $n\geq 0$. The induction argument for this piece is straightforward and is omitted. Since there is never division by zero in our solution, and since $z_{n}\neq 0$ for all $n\geq 0$, the following algebraic computation is well defined.
$$\frac{z_{n}+B}{z_{n+1}}= \frac{(z_{n}+B)(z_{n-1}+B)}{z^{2}_{n}+Bz_{n}}= \frac{z_{n-1}+B}{z_{n}}.$$
Thus, we have the following algebraic invariant:
$$\frac{z_{n-1}+B}{z_{n}}=constant.$$
For our fixed but arbitrary initial conditions with $z_{0}\neq 0$, let us denote $C=\frac{z_{-1}+B}{z_{0}}$. Since $z_{0}\neq 0$, $C$ is well defined and since $z_{-1}\neq -B$, $C\neq 0$.
Algebraic manipulations of our invariant yield the following:
$$z_{n}=\frac{z_{n-1}+B}{C} ,\quad n\geq 1.$$
Thus, the dynamics of $\{z_{n}\}^{\infty}_{n=-1}$ are given by a linear equation in this case. Since we already know the closed form solution for any linear equation, we may obtain a closed form solution for $\{z_{n}\}^{\infty}_{n=-1}$ in this case.
The resulting closed form solution is $$z_{n}=\frac{z_{0}}{C^{n}}+\sum^{n}_{k=1}\frac{B}{C^{k}},\quad n\geq 1.$$
Now, we must find the forbidden set for our Equation (7). Let $\mathfrak{F}$ be the forbidden set.
Suppose $z_{0},z_{-1}\neq -B$, $C=\frac{z_{-1} + B }{z_{0}}\neq 1$, and $z_{0}\not\in \left\{\frac{B-BC^{n+1}}{C-1}|n\in\mathbb{N}\right\}$, and assume that $z_{n}$ is well defined for $n\leq N$. 
Then $$z_{N-1}\neq -B,$$
since $z_{0}\not\in \left\{\frac{B-BC^{n+1}}{C-1}|n\in\mathbb{N}\right\}$. Thus, $z_{n}$ is well defined for $n\leq N+1$. By induction, $z_{n}$ is well defined for all $n\in\mathbb{N}$. Thus, $(z_{0},z_{-1})\not\in \mathfrak{F}$.\par
Now, suppose $z_{0},z_{-1}\neq -B$, $C=\frac{z_{-1} + B }{z_{0}}\neq 1$, and $z_{0}\in \left\{\frac{B-BC^{n+1}}{C-1}|n\in\mathbb{N}\right\}$. Further assume for the sake of contradiction that $(z_{0},z_{-1})\not\in \mathfrak{F}$. Then, since $(z_{0},z_{-1})\not\in \mathfrak{F}$, $z_{n}$ is well defined for all $n\in\mathbb{N}$, but also 
$z_{N}=-B$ for some $N\in\mathbb{N}$, since $z_{0}\in \left\{\frac{B-BC^{n+1}}{C-1}|n\in\mathbb{N}\right\}$. This is a contradiction. Thus, $(z_{0},z_{-1})\in \mathfrak{F}$.\par
Suppose $z_{0},z_{-1}\neq -B$, $C=\frac{z_{-1} + B }{z_{0}}= 1$, and $z_{0}\not\in \left\{-nB-B|n\in\mathbb{N}\right\}$, and assume that $z_{n}$ is well defined for $n\leq N$. 
Then $$z_{N-1}\neq -B,$$
since $z_{0}\not\in \{-nB-B|n\in\mathbb{N}\}$. Thus, $z_{n}$ is well defined for $n\leq N+1$. By induction, $z_{n}$ is well defined for all $n\in\mathbb{N}$. Thus, $(z_{0},z_{-1})\not\in \mathfrak{F}$.\par
Now, suppose $z_{0},z_{-1}\neq -B$, $C=\frac{z_{-1} + B }{z_{0}}=1$, and $z_{0}\in \left\{-nB-B|n\in\mathbb{N}\right\}$. Further assume for the sake of contradiction that $(z_{0},z_{-1})\not\in \mathfrak{F}$. Then, since $(z_{0},z_{-1})\not\in \mathfrak{F}$, $z_{n}$ is well defined for all $n\in\mathbb{N}$, but also 
$z_{N}=-B$ for some $N\in\mathbb{N}$, since $z_{0}\in \left\{-nB-B|n\in\mathbb{N}\right\}$. This is a contradiction. Thus, $(z_{0},z_{-1})\in \mathfrak{F}$.\par
Finally, suppose either $z_{0}=-B$ or $z_{-1}=-B$, then $(z_{0},z_{-1})\in \mathfrak{F}$. Thus, 
$\mathfrak{F}=S_{4}$, where $S_{4}$ is given in Figure 2.
\end{proof}

\begin{thm}
Consider the rational difference equation,
\begin{equation}
 z_{n+1}=\frac{z_{n}z_{n-1}+Bz_{n}}{B + z_{n}},\quad n=0,1,\dots,
\end{equation}
 with $B\in \mathbb{C}\setminus \{0\}$ and with initial conditions $z_{0},z_{-1}\in \mathbb{C}$. Then the forbidden set, $\mathfrak{F}=S_{5}$, where $S_{5}$ is given in Figure 2. Also, given $(z_{0},z_{-1})\notin \mathfrak{F}$, $z_{n+1}=\frac{C}{z_{n}+B}$ for all $n\geq 0$, where $C=\left(z_{-1} + B \right)\left(z_{0}\right)$. This implies the following:
\begin{enumerate}[a.]
\item If $C=0$, then $z_{n}=0$ for all $n\geq 0$.
\item If $\frac{-C}{B^{2}}\in \mathbb{C}\setminus \left[\frac{1}{4},\infty\right)$, then $$z_{n}=B\left(\frac{(B\lambda_{2}-z_{0}-B)\lambda^{n+1}_{1}+(z_{0}+B-B\lambda_{1})\lambda^{n+1}_{2}}{(B\lambda_{2}-z_{0}-B)\lambda^{n}_{1}+(z_{0}+B-B\lambda_{1})\lambda^{n}_{2}}\right)-B,\quad n\geq 0.$$
Where $$\lambda_{1}=\frac{1-\sqrt{1+\frac{4C}{B^{2}}}}{2},\quad and \quad \lambda_{2}=\frac{1+\sqrt{1+\frac{4C}{B^{2}}}}{2}.$$
\item If $\frac{-C}{B^{2}}=\frac{1}{4}$, then 
$$z_{n}=B\left(\frac{B+(n+1)\left(2z_{0}+B \right)}{2B+4nz_{0}+2nB}\right)-B,\quad n\geq 0.$$
\item If $\frac{-C}{B^{2}}\in \left(\frac{1}{4},\infty\right)$, then call $B\sqrt{\frac{-4C}{B^{2}}-1}=D$ and call $\arccos\left(\sqrt{\frac{B^{2}}{-4C}}\right)=\rho$, and for $n\geq 0$, we have
$$z_{n}=B\sqrt{\frac{-C}{B^{2}}}\left(\frac{D\cos\left((n+1)\rho\right)+(B+2z_{0})\sin\left((n+1)\rho\right)}{D\cos\left(n\rho\right)+(B+2z_{0})\sin\left(n\rho\right)}  \right)  -B.$$
\end{enumerate}\end{thm}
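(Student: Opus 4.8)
The plan is to follow the template of the proofs of Theorems 4 and 5: extract an algebraic invariant, use it to collapse Equation (8) to a single first order Riccati equation, and then import the forbidden set and closed form solution from Theorem 1. I would begin with the non-forbidden case. Assuming $(z_0,z_{-1})\notin\mathfrak{F}$, no denominator $B+z_n$ vanishes along the orbit, so the computation
$$z_{n+1}(z_n+B)=\frac{(z_n z_{n-1}+Bz_n)(z_n+B)}{B+z_n}=z_n(z_{n-1}+B)$$
is legitimate and shows that $z_n(z_{n-1}+B)$ is invariant. Setting $C=(z_{-1}+B)z_0$ and solving for $z_{n+1}$ then yields $z_{n+1}=\frac{C}{z_n+B}$ for all $n\geq 0$, exactly as claimed.

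Next I would recognize $z_{n+1}=\frac{C}{z_n+B}$ as the Riccati equation of Theorem 1 under the parameter identification $\alpha=C$, $\beta=0$, $A=B$, and Riccati coefficient $1$. With these values $\beta+A=B\neq 0$ and $\alpha-\beta A=C$, so the dichotomy of Theorem 1 is governed by $\frac{\beta A-\alpha B}{(\beta+A)^2}=\frac{-C}{B^2}$. The degenerate possibility $C=0$ places us in part (3) of Theorem 1, giving $z_n=0$ and case (a); when $C\neq 0$ the three ranges $\frac{-C}{B^2}\in\mathbb{C}\setminus[\frac14,\infty)$, $\frac{-C}{B^2}=\frac14$, and $\frac{-C}{B^2}\in(\frac14,\infty)$ fall under parts (5), (6), (7) and produce cases (b), (c), (d) respectively. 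I would then specialize the formulas of Theorem 1, checking that $w_\pm$ become the stated $\lambda_{1},\lambda_{2}=\frac{1\mp\sqrt{1+4C/B^2}}{2}$, that $\phi$ becomes $\rho=\arccos\sqrt{\frac{B^2}{-4C}}$, and that the prefactor $\frac{\beta+A}{B}=B$ together with $\sqrt{4R-1}$ reproduces $D=B\sqrt{\frac{-4C}{B^2}-1}$ and the overall factor $B\sqrt{-C/B^2}$. This bookkeeping is routine but carries the bulk of the algebra.

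Finally I would determine the forbidden set $S_5$. The orbit can only break down through division by $B+z_n$, that is $z_N=-B$ for some $N$, with the immediate obstruction $z_0=-B$ at the first step. Following the earlier proofs, I would let $\mathfrak{F}_C$ denote the forbidden set of the reduced Riccati equation $x_{n+1}=\frac{C}{x_n+B}$ furnished by Theorem 1, prove by the usual two-directional argument (a forward induction showing that $z_0\notin\mathfrak{F}_C$ keeps every iterate well defined, and a contradiction argument showing that $z_0\in\mathfrak{F}_C$ forces a vanishing denominator) that a point is forbidden precisely when its $z_0$-coordinate lies in $\mathfrak{F}_C$ for the value $C=(z_{-1}+B)z_0$ it determines, and then pull this back through the relation defining $C$ to obtain $S_5$ as displayed in Figure 2.

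The main obstacle I anticipate is the forbidden set rather than the closed form. Because the invariant constant $C$ itself depends on $(z_0,z_{-1})$, the Riccati forbidden set $\mathfrak{F}_C$ varies from fiber to fiber, so assembling the union over $C$ and rewriting it as an explicit subset of $\mathbb{C}^2$ requires care. In addition the boundary points $z_0=-B$ and $z_{-1}=-B$ must be handled explicitly; in particular $z_{-1}=-B$ with $z_0\neq 0$ also yields $C=0$ yet gives a genuinely nonzero $z_0$, a discrepancy with the clean statement $z_n=0$ in case (a) that I would need to reconcile when writing down $S_5$.
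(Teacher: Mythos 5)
Your proposal matches the paper's proof essentially step for step: the same invariant computation $z_{n+1}(z_n+B)=z_n(z_{n-1}+B)$, the same reduction to the first order Riccati equation $z_{n+1}=\frac{C}{z_n+B}$ resolved by Theorem 1, and the same two-directional fiberwise argument assembling $\mathfrak{F}$ as the union of the Riccati forbidden sets $\mathfrak{F}_C$ pulled back through $C=z_0(z_{-1}+B)$, with the observation that $\{0\}\times\mathbb{C}$ meets no forbidden fiber. The edge case you flag (where $z_{-1}=-B$ and $z_0\neq 0$ give $C=0$ but $z_0\neq 0$, so $z_n=0$ only for $n\geq 1$) is a genuine imprecision in the statement of case (a) that the paper's own proof also glosses over.
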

\begin{proof}
Let us first consider the case where $(z_{0},z_{-1})\notin \mathfrak{F}$. In this case clearly $z_{n}\neq -B$ for $n\geq 0$, or else we would have division by zero.
Since $z_{n}\neq -B$ for all $n\geq 0$, the following algebraic computation is well defined:
$$z_{n+1}(z_{n}+B)=\left(\frac{z_{n}z_{n-1}+Bz_{n}}{B + z_{n}}\right)(z_{n}+B)= z_{n}(z_{n-1}+B).$$
Thus we have the following algebraic invariant:
$$z_{n}(z_{n-1}+B)=constant.$$
For our fixed but arbitrary initial conditions, let us denote $C=z_{0}(z_{-1}+B)$. 
 Algebraic manipulations of our invariant yield the following:
$$z_{n}=\frac{C}{z_{n-1}+ B},\quad n\geq 1.$$
Since $z_{n}\neq -B$ for all $n\geq 0$, this equation is well-defined for all $n\geq 1$. Thus the dynamics of $\{z_{n}\}^{\infty}_{n=-1}$ are given by a Riccati equation in this case. Since we already know the closed form solution for any Riccati equation, we may obtain a closed form solution for $\{z_{n}\}^{\infty}_{n=-1}$ in this case. We use the known results for Riccati equations restated in Section 3 to obtain the closed form solutions in the statement of the theorem.\par
Now, we must find the forbidden set for our Equation (8).
Let $\mathfrak{F}$ be the forbidden set. Assume $z_{n}$ is well defined for $n\leq N$, then $z_{n}\neq -B$ for $1\leq n\leq N-1$. Using this we get that for $n<N$,
$$z_{n+1}(z_{n}+B)=\left(\frac{z_{n}z_{n-1}+Bz_{n}}{B + z_{n}}\right)(z_{n}+B)= z_{n}(z_{n-1}+B).$$
So,
$$z_{n}(z_{n-1}+B)=constant.$$
for $0\leq n\leq N$. Thus, assuming $z_{N+1}$ is well defined, 
$$z_{n+1}= \frac{C}{z_{n}+B},$$
for $0\leq n\leq N$. Where $C=z_{0}(z_{-1}+B)$.
Call the forbidden set of the following first order difference equation $\mathfrak{F}_{C}$,
$$x_{n+1}= \frac{C}{x_{n}+B}, \quad n=0,1,2,\dots.$$
Note that the set $\mathfrak{F}_{C}$ changes depending on the value of $C$.
Now, suppose $C=z_{0}(z_{-1}+B)$ and $z_{0}\not\in \mathfrak{F}_{C}$, and assume that $z_{n}$ is well defined for $n\leq N$. Recall that we have shown that this implies $z_{n}\neq -B$ for $n < N$. Then 
$$B+z_{n}\neq 0,$$
since $z_{0}\not\in \mathfrak{F}_{C}$. Thus, $z_{n}$ is well defined for $n\leq N+1$. By induction, $z_{n}$ is well defined for all $n\in\mathbb{N}$. Thus, $(z_{0},z_{-1})\not\in \mathfrak{F}$.\par
Now, suppose $C=z_{0}(z_{-1}+B)$ and $z_{0}\in \mathfrak{F}_{C}$. Further assume for the sake of contradiction that $(z_{0},z_{-1})\not\in \mathfrak{F}$. Then, since $(z_{0},z_{-1})\not\in \mathfrak{F}$, $z_{n}$ is well defined for all $n\in\mathbb{N}$, but also 
$B+z_{N}=0$ for some $N\in\mathbb{N}$, since $z_{0}\in \mathfrak{F}_{C}$. This is a contradiction. Thus $(z_{0},z_{-1})\in \mathfrak{F}$. So, 
$$\mathfrak{F}=\bigcup_{b\in\mathbb{C}} \bigcup_{C\in\mathbb{C}}  \left(\left(\mathfrak{F}_{C}\times \{b\}\right)\cap \left\{(a,b)|C=a(b+B)\right\}\right).$$
Notice that $\left(\{0\}\times \mathbb{C}\right)\cap \mathfrak{F}=\emptyset$ since if $z_{0}=0$ then a simple induction argument tells us that $z_{n}=0$ for all $n\in\mathbb{N}$, so there will never be division by zero in such a case. 
So we may reduce the above expression as follows,
$$\mathfrak{F}= \bigcup_{C\in\mathbb{C}} \left\{ \left(a,\frac{C-Ba}{a}\right)| a\in\mathfrak{F}_{C}\setminus\left\{0\right\}\right\}.$$
From the above reduction, and from the facts about the forbidden sets of the Riccati difference equation in Section 3, we get $\mathfrak{F}=S_{5}$, where $S_{5}$ is given in Figure 2.
\end{proof}

\begin{thm}
Consider the rational difference equation,
\begin{equation}
 z_{n+1}=\frac{z_{n}z_{n-1}+Bz_{n-1} - Bz_{n}}{z_{n}},\quad n=0,1,\dots,
\end{equation}
 with $B\in \mathbb{C}\setminus\{0\}$ and with initial conditions $z_{0},z_{-1}\in \mathbb{C}$. Then the forbidden set, $\mathfrak{F}=S_{6}$, where $S_{6}$ is given in Figure 2. Also, given $(z_{0},z_{-1})\notin \mathfrak{F}$, $z_{n+1}=\frac{C - B z_{n}}{z_{n}}$ for all $n\geq 0$,
 where $C=z_{-1}(z_{0}+B)$. This implies the following:
\begin{enumerate}[a.]
\item If $C=0$, then $z_{n}=-B$ for all $n\geq 0$.
\item If $\frac{-C}{B^{2}}\in \mathbb{C}\setminus \left[\frac{1}{4},\infty\right)$, then $$z_{n}=-B\left(\frac{(B\lambda_{2}+z_{0})\lambda^{n+1}_{1}-(z_{0}+B\lambda_{1})\lambda^{n+1}_{2}}{(B\lambda_{2}+z_{0})\lambda^{n}_{1}-(z_{0}+B\lambda_{1})\lambda^{n}_{2}}\right),\quad n\geq 0.$$
Where $$\lambda_{1}=\frac{1-\sqrt{1+\frac{4C}{B^{2}}}}{2},\quad and \quad \lambda_{2}=\frac{1+\sqrt{1+\frac{4C}{B^{2}}}}{2}.$$
\item If $\frac{-C}{B^{2}}=\frac{1}{4}$, then 
$$z_{n}=-B\left(\frac{-B+(n+1)\left(2z_{0}+B \right)}{-2B+4nz_{0}+2nB}\right),\quad n\geq 0.$$
\item If $\frac{-C}{B^{2}}\in \left(\frac{1}{4},\infty\right)$, then call $B\sqrt{\frac{-4C}{B^{2}}-1}=D$ and $\arccos\left(\sqrt{\frac{B^{2}}{-4C}}\right)=\rho$, and for $n\geq 0$, we get
$$z_{n}=-B\sqrt{\frac{-C}{B^{2}}}\left(\frac{D\cos\left((n+1)\rho\right)+(-2z_{0}-B)\sin\left((n+1)\rho\right)}{D\cos\left(n\rho\right)+(-2z_{0}-B)\sin\left(n\rho\right)}\right).$$
\end{enumerate}
\end{thm}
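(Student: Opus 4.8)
The plan is to exploit a multiplicative invariant that collapses the second-order recurrence (9) into a first-order Riccati equation, and then to read off both the closed-form solution and the forbidden set from Theorem 1. Motivated by the quantity $C=z_{-1}(z_0+B)$ appearing in the statement, I would first show that $z_{n-1}(z_n+B)$ is constant along every well-defined orbit. Since $z_n$ is the denominator in (9), the hypothesis $(z_0,z_{-1})\notin\mathfrak{F}$ guarantees $z_n\neq0$ for all $n\ge0$, so the following computation is legitimate:
$$z_n(z_{n+1}+B)=z_n\left(\frac{z_nz_{n-1}+Bz_{n-1}-Bz_n}{z_n}+B\right)=z_nz_{n-1}+Bz_{n-1}=z_{n-1}(z_n+B).$$
Hence $z_{n-1}(z_n+B)=C$ for all $n\ge0$ with $C=z_{-1}(z_0+B)$. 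Solving for $z_n$ gives $z_n=\frac{C-Bz_{n-1}}{z_{n-1}}$ for $n\ge1$, which is the asserted first-order recurrence; a direct check that $z_1=\frac{z_0z_{-1}+Bz_{-1}-Bz_0}{z_0}=\frac{C-Bz_0}{z_0}$ confirms the whole forward orbit $z_0,z_1,z_2,\dots$ is governed by this Riccati map with parameter $C$.

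Next I would read off the closed forms from Theorem 1. The Riccati recurrence matches the equation there with $\alpha=C$, $\beta=-B$, $A=0$ and leading denominator coefficient $1$, so that $\beta+A=-B\neq0$ and the discriminant parameter $\frac{\beta A-\alpha B}{(\beta+A)^2}$ equals $\frac{-C}{B^2}$. When $C=0$ the quantity $\alpha B-\beta A$ (which equals $C$) vanishes and we land in possibility (3) of Theorem 1, giving $z_n=-B$; this is case (a). When $C\neq0$ this quantity is nonzero, so possibilities (5), (6), (7) apply according as $\frac{-C}{B^2}$ lies in $\mathbb{C}\setminus[\tfrac14,\infty)$, equals $\tfrac14$, or lies in $(\tfrac14,\infty)$, matching cases (b), (c), (d) with $w_-=\lambda_1$ and $w_+=\lambda_2$. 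Substituting these parameters into the explicit formulas of Theorem 1 yields the stated expressions after routine simplification, which I would not carry out in detail.

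Finally I would determine the forbidden set. A point $(z_0,z_{-1})$ lies in $\mathfrak{F}$ exactly when the orbit meets a vanishing denominator: either $z_0=0$, which fails immediately and forces $\{0\}\times\mathbb{C}\subseteq\mathfrak{F}$, or, for $z_0\neq0$, when the Riccati orbit with parameter $C=z_{-1}(z_0+B)$ eventually reaches the singular value $0$. Writing $\mathfrak{F}_C$ for the forbidden set of $x_{n+1}=\frac{C-Bx_n}{x_n}$ provided by Theorem 1, the same induction scheme used in Theorems 2--6 shows that $z_n$ is well defined for all $n$ if and only if $z_0\notin\mathfrak{F}_C$. Letting $C$ range over $\mathbb{C}$ and using $C=z_{-1}(z_0+B)$ to solve $z_{-1}=\frac{C}{z_0+B}$ then expresses $\mathfrak{F}$ as the union of $\{0\}\times\mathbb{C}$ with the translated copies $\bigcup_{C}\{(a,\tfrac{C}{a+B}):a\in\mathfrak{F}_C\setminus\{0\}\}$; inserting the explicit description of each $\mathfrak{F}_C$ from Theorem 1 produces the set $S_6$ of Figure 2.

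The main obstacle is the bookkeeping in this last step: one must track the degenerate strata $z_0=0$, $z_0=-B$ (which forces $C=0$), and $z_{-1}=0$, verify that the substitution $z_{-1}=C/(z_0+B)$ is a bijection off these strata, and confirm that no point is double-counted or spuriously omitted as $C$ ranges over all of $\mathbb{C}$ and the internal case structure of $\mathfrak{F}_C$ changes. The forward direction, by contrast, is essentially mechanical once the invariant is in hand.
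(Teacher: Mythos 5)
Your proposal is correct and follows essentially the same route as the paper: the multiplicative invariant $z_{n-1}(z_n+B)=C$, reduction to the first-order Riccati equation $z_{n+1}=\frac{C-Bz_n}{z_n}$, closed forms read off from Theorem 1 with $\alpha=C$, $\beta=-B$, $A=0$, and the forbidden set assembled as $\bigcup_C\bigl\{(a,\tfrac{C}{a+B}) : a\in\mathfrak{F}_C\bigr\}$ after handling the degenerate strata (the paper excises $a=-B$, noting $z_0=-B$ yields the constant orbit $-B$, which absorbs the bookkeeping you flag as the remaining obstacle). The only substantive difference is presentational: the paper runs the invariant computation a second time on partially defined orbits to justify the equivalence $(z_0,z_{-1})\in\mathfrak{F}\iff z_0\in\mathfrak{F}_C$ in both directions, which is the induction scheme you invoke by reference.
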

\begin{proof}
Let us first consider the case where $(z_{0},z_{-1})\notin \mathfrak{F}$. In this case clearly $z_{n}\neq 0$ for $n\geq 0$, or else we would have division by zero.
Since $z_{n}\neq 0$ for all $n\geq 0$, the following algebraic computation is well defined:
$$z_{n}(z_{n+1}+B)=\left(\frac{z_{n}z_{n-1}+Bz_{n-1} - Bz_{n}}{z_{n}}+B\right)(z_{n})= \left(\frac{z_{n}z_{n-1}+Bz_{n-1}}{z_{n}}\right)(z_{n})= z_{n-1}(z_{n}+B).$$
Thus, we have the following algebraic invariant:
$$z_{n-1}(z_{n}+B)=constant.$$
For our fixed but arbitrary initial conditions, let us denote $C=z_{-1}(z_{0}+B)$. 
 Algebraic manipulations of our invariant yield the following:
$$z_{n}=\frac{C - B z_{n-1}}{z_{n-1}},\quad n\geq 1.$$
Since $z_{n}\neq 0$ for all $n\geq 0$, this equation is well-defined for all $n\geq 1$. Thus, the dynamics of $\{z_{n}\}^{\infty}_{n=-1}$ are given by a Riccati equation in this case. Since we already know the closed form solution for any Riccati equation, we may obtain a closed form solution for $\{z_{n}\}^{\infty}_{n=-1}$ in this case. We use the known results for Riccati equations restated in Section 3 to obtain the closed form solutions in the statement of the theorem.
Now, we must find the forbidden set for our Equation (9). 
Let $\mathfrak{F}$ be the forbidden set. Assume $z_{n}$ is well defined for $n\leq N$, then $z_{n}\neq 0$ for $1\leq n\leq N-1$. Using this we get that for $n<N$
$$z_{n}(z_{n+1}+B)=\left(\frac{z_{n}z_{n-1}+Bz_{n-1} - Bz_{n}}{z_{n}}+B\right)(z_{n})=$$$$\left(\frac{z_{n}z_{n-1}+Bz_{n-1}}{z_{n}}\right)(z_{n})= z_{n-1}(z_{n}+B).$$
So,
$$z_{n}(z_{n-1}+B)=constant.$$
for $0\leq n\leq N$. Thus, assuming $z_{N+1}$ is well defined, 
$$z_{n+1}= \frac{C-Bz_{n}}{z_{n}},$$
for $0\leq n\leq N$, where $C=z_{-1}(z_{0}+B)$.
Call the forbidden set of the following first order difference equation $\mathfrak{F}_{C}$,
$$x_{n+1}= \frac{C-Bx_{n}}{x_{n}}, \quad n=0,1,2,\dots.$$
Note that the set $\mathfrak{F}_{C}$ changes depending on the value of $C$.
Now, suppose $C=z_{-1}(z_{0}+B)$ and $z_{0}\not\in \mathfrak{F}_{C}$, and assume that $z_{n}$ is well defined for $n\leq N$. Recall that we have shown that this implies $z_{n}\neq 0$ for $n < N$. Then 
$$z_{N}\neq 0,$$
since $z_{0}\not\in \mathfrak{F}_{C}$. Thus $z_{n}$ is well defined for $n\leq N+1$. By induction, $z_{n}$ is well defined for all $n\in\mathbb{N}$. Thus, $(z_{0},z_{-1})\not\in \mathfrak{F}$.\newline
Now, suppose $C=z_{-1}(z_{0}+B)$ and $z_{0}\in \mathfrak{F}_{C}$. Further assume for the sake of contradiction that $(z_{0},z_{-1})\not\in \mathfrak{F}$. Then, since $(z_{0},z_{-1})\not\in \mathfrak{F}$, $z_{n}$ is well defined for all $n\in\mathbb{N}$, but also 
$z_{N}=0$ for some $N\in\mathbb{N}$, since $z_{0}\in \mathfrak{F}_{C}$. This is a contradiction. Thus, $(z_{0},z_{-1})\in \mathfrak{F}$. So 
$$\mathfrak{F}=\bigcup_{z_{-1}\in\mathbb{C}} \bigcup_{C\in\mathbb{C}}  \left(\left(\mathfrak{F}_{C}\times \{z_{-1}\}\right)\cap \left\{(z_{0},z_{-1})|C=z_{-1}(z_{0}+B)\right\}\right).$$
Notice that $\left(\{-B\}\times \mathbb{C}\right)\cap \mathfrak{F}=\emptyset$, since if $z_{0}=-B$, then a simple induction argument tells us that $z_{n}=-B$ for all $n\in\mathbb{N}$, so there will never be division by zero in such a case. 
So, we may reduce the above expression as follows,
$$\mathfrak{F}= \bigcup_{C\in\mathbb{C}} \left\{ \left(a,\frac{C}{a+B}\right)| a\in\mathfrak{F}_{C}\setminus\left\{-B\right\}\right\}.$$
From the above reduction, and from the facts about the forbidden sets of the Riccati difference equation in Section 3, we get $\mathfrak{F}=S_{6}$, where $S_{6}$ is given in Figure 2.
\end{proof}

\section{Conclusion}
We have introduced some new invariants for rational difference equations, including some invariants for certain cases of the second order rational difference equation of the form,
$$x_{n+1}=\frac{\alpha + \beta x_{n} + \gamma x_{n-1}}{A + B x_{n} + C x_{n-1}},\quad n=0,1,2,\dots.$$
This second order linear fractional rational difference equation has been studied extensively in the case of nonnegative parameters and nonnegative initial conditions.
See \cite{kulenovicladas} for more information about this case. However, the invariants we have found only apply in a region of the parameters where at least one of the parameters cannot be a nonnegative real number.
For this reason, these particular examples were overlooked in \cite{kulenovicladas} as well as in the subsequent literature.\par
What is particularly interesting about the presented cases is that we are able to use invariants we find to obtain both the forbidden set, and a closed form solution for our rational difference equations through reduction of order.

\vfill

\begin{center}


\slide{

$$S_{1}=\left\{\left(0,\frac{-1}{B}\right)\right\}\bigcup \left\{\left(\frac{-1}{B}\left(\frac{n+1}{n}\right),\frac{-1}{B}\right)|n\in\mathbb{N}\right\}\bigcup \left\{\left(\frac{-1}{B},\frac{-1}{B}\left(\frac{n+1}{n}\right)\right)|n\in\mathbb{N}\right\}\bigcup $$
$$\bigcup_{D\in\mathbb{C}\setminus [0,4]}\left\{\left(a,\frac{DBa-Ba-1}{B^{2}a+B}\right)\left|a\in\left\{\frac{-2}{B}\left(\frac{\left(1+\sqrt{1-\frac{4}{D}}\right)^{n-1}-\left(1-\sqrt{1-\frac{4}{D}}\right)^{n-1}}{\left(1+\sqrt{1-\frac{4}{D}}\right)^{n}-\left(1-\sqrt{1-\frac{4}{D}}\right)^{n}}\right)+\frac{D-1}{B}\right|n\in\mathbb{N}\right\}\setminus\left\{0,\frac{-1}{B}\right\}\right\}$$
$$\bigcup_{D\in (0,4)} \left\{\left(a,\frac{DBa-Ba-1}{B^{2}a+B}\right)\left|a\in\left\{\frac{-D}{B}\left(1-\sqrt{\frac{4}{D}-1}\cot\left(n\cdot \arccos\left(\frac{\sqrt{D}}{2}\right)\right)\right)+\frac{D-1}{B}\right|n\in\mathbb{N}\right\}\setminus\left\{0,\frac{-1}{B}\right\}\right\}$$
$$\bigcup \left\{\left(a,\frac{3Ba-1}{B^{2}a+B}\right)\left|a\in \left\{\frac{-2}{B}\left(\frac{n-1}{n}\right)+\frac{3}{B}\right|n\in\mathbb{N}\right\}\setminus\left\{0,\frac{-1}{B}\right\}\right\}.$$ 

$$S_{2}=\left\{\left(\frac{-1}{B},0\right)\right\}\bigcup \left\{\left(0,\frac{1}{Bn}\right)|n\in\mathbb{N}\right\}\bigcup \left\{\left(\frac{1}{Bn},0\right)|n\in\mathbb{N}\right\}\bigcup $$
$$\bigcup_{D\in\mathbb{C}\setminus \left((-\infty,\frac{-1}{4}]\cup\{0\}\right)}\left\{\left(a,\frac{Ba+1}{DB^{2}a}\right)\left|a\in\left\{\frac{2}{B}\left(\frac{\left(1+\sqrt{1+4D}\right)^{n-1}-\left(1-\sqrt{1+4D}\right)^{n-1}}{\left(1+\sqrt{1+4D}\right)^{n}-\left(1-\sqrt{1+4D}\right)^{n}}\right)+\frac{1}{DB}\right|n\in\mathbb{N}\right\}\setminus\left\{0,\frac{-1}{B}\right\}\right\}$$
$$\bigcup_{D\in (-\infty,\frac{-1}{4})} \left\{\left(a,\frac{Ba+1}{DB^{2}a}\right)\left|a\in\left\{\frac{-1}{2DB}\left(-1-\sqrt{-4D-1}\cot\left(n\cdot \arccos\left(\frac{1}{2\sqrt{-D}}\right)\right)\right)\right|n\in\mathbb{N}\right\}\setminus\left\{0,\frac{-1}{B}\right\}\right\}$$
$$\bigcup \left\{ \left(a,\frac{-4Ba-4}{B^{2}a}\right)\left| a\in\left\{\frac{-2n-2}{Bn}\right|n\in\mathbb{N}\right\}\setminus\left\{0,\frac{-1}{B}\right\}\right\}.$$
\hspace{9cm} Figure 1.

}


\slide{
$$S_{3}=\bigcup_{C\neq 1}\left\{\left(\frac{B-BC^{n}}{C^{n}-C^{n+1}},\frac{B-BC^{n+1}}{C^{n+1}-C^{n+2}}\right)|n\in\mathbb{N}\right\}\bigcup\left(\{0\}\times \mathbb{C}\right)\bigcup\left(\mathbb{C}\times \{0\}\right)\bigcup\left\{\left(nB,nB+B\right)|n\in\mathbb{N}\right\}.$$
$$S_{4}=\bigcup_{C\neq 1}\left\{\left(\frac{B-BC^{n+1}}{C-1},\frac{B-BC^{n+2}}{C-1}\right)|n\in\mathbb{N}\right\}\bigcup\left(\{-B\}\times \mathbb{C}\right)\bigcup\left(\mathbb{C}\times \{-B\}\right)\bigcup\left\{\left(-nB-B,-nB-2B\right)|n\in\mathbb{N}\right\}.$$
$$S_{5}= \{-B,-B\}\bigcup \left\{ \left(a,\frac{B^{2}}{-4a}-B\right)\left| a\in\left\{\frac{-Bn-B}{2n}\right|n\in\mathbb{N}\right\}\right\}\bigcup$$
$$\bigcup_{D\in\mathbb{C}\setminus \left((-\infty,\frac{-1}{4}]\cup\{0\}\right)}\left\{\left(a,\frac{DB^{2}}{a}-B\right)\left|a\in\left\{-2DB\left(\frac{\left(1+\sqrt{1+4D}\right)^{n-1}-\left(1-\sqrt{1+4D}\right)^{n-1}}{\left(1+\sqrt{1+4D}\right)^{n}-\left(1-\sqrt{1+4D}\right)^{n}}\right)-B\right|n\in\mathbb{N}\right\}\setminus\left\{0\right\}\right\}$$
$$\bigcup_{D\in (-\infty,\frac{-1}{4})} \left\{\left(a,\frac{DB^{2}}{a}-B\right)\left|a\in\left\{\frac{B}{2}\left(-1-\sqrt{-4D-1}\cot\left(n\cdot \arccos\left(\frac{1}{2\sqrt{-D}}\right)\right)\right)\right|n\in\mathbb{N}\right\}\setminus\left\{0\right\}\right\}.$$
$$S_{6}= \{0,0\}\bigcup \left\{ \left(a,\frac{-B^{2}}{4a+4B}\right)\left| a\in\left\{\frac{-Bn+B}{2n}\right|n\in\mathbb{N}\right\}\right\}\bigcup$$
$$\bigcup_{D\in\mathbb{C}\setminus \left((-\infty,\frac{-1}{4}]\cup\{0\}\right)}\left\{\left(a,\frac{DB^{2}}{a+B}\right)\left|a\in\left\{2DB\left(\frac{\left(1+\sqrt{1+4D}\right)^{n-1}-\left(1-\sqrt{1+4D}\right)^{n-1}}{\left(1+\sqrt{1+4D}\right)^{n}-\left(1-\sqrt{1+4D}\right)^{n}}\right)\right|n\in\mathbb{N}\right\}\setminus\left\{-B\right\}\right\}$$
$$\bigcup_{D\in (-\infty,\frac{-1}{4})} \left\{\left(a,\frac{DB^{2}}{a+B}\right)\left|a\in\left\{\frac{-B}{2}\left(1-\sqrt{-4D-1}\cot\left(n\cdot \arccos\left(\frac{1}{2\sqrt{-D}}\right)\right)\right)\right|n\in\mathbb{N}\right\}\setminus\left\{-B\right\}\right\}.$$
\hspace{9cm} Figure 2.

}

\end{center}

\par\vspace{0.2 cm}

\end{document}